\newtheorem{theorem}{Theorem}[section]
\newtheorem{definition}[theorem]{Definition}
\newtheorem{corollary}[theorem]{Corollary}
\newtheorem{lemma}[theorem]{Lemma}
\numberwithin{equation}{section}
\theoremstyle{remark}
\newtheorem{remark}[theorem]{Remark}
\newtheorem{example}[theorem]{\bf Example}
\newcommand{\R}{\mathbb{R}}
\newcommand{\C}{\mathbb{C}}
\newcommand{\D}{\mathbb{D}}
\newcommand{\FC}{\mathcal{C}}
 \newcommand{\dd}{\mathrm{d}}
\begin{document}

\title[Coarse classification of Willmore two-spheres]{\bf{Willmore surfaces in spheres via loop groups II: a coarse classification of Willmore two-spheres by potentials}}
\author{Peng Wang}
\date{}
\maketitle

\begin{center}
{\bf Abstract}

\end{center}

\ \\
Applying the DPW version of  the theory developed by Burstall and Guest for harmonic maps of finite uniton type, we derive a coarse classification of Willmore two-spheres in $S^{n+2}$ in terms of the normalized potential of their (harmonic) conformal Gauss maps.
Moreover, for the case of $S^6$, some geometric properties of the corresponding Willmore two-spheres are discussed. The classical classification of Willmore two-spheres in $S^4$ are also derived as a corollary. \\

{\bf Keywords:}  harmonic maps of finite uniton; Willmore two-spheres; normalized potential; S-Willmore surfaces; totally isotropic surfaces.\\

MSC(2010): 58E20; 53C43;  	53A30;  	53C35\\

\section{Introduction}

Willmore surfaces can be looked at as surfaces sharing a best conformal placement in $S^{n+2}$,
since they are critical surfaces of the conformally invariant Willmore functional $$\int_M(H^2-K+1)\mathrm{d}M.$$ It is well known that minimal surfaces in the three space forms $\mathbb{R}^{n+2}$, $S^{n+2}$ and $\mathbb{H}^{n+2}$ are all Willmore surfaces (see e.g. \cite{Ejiri1988}, \cite{Helein}), providing a large class of examples and indicating the high variety and complexity of Willmore surfaces.

The classical theorem, due to Bryant, states that every Willmore two-sphere in $S^3$
is conformally equivalent to  a minimal surface in $\mathbb{R}^3$ with embedded ends.
This work has led to a systematical investigation of Willmore two-spheres as well as Willmore surfaces in $S^{n+2}$. In this context, many interesting geometric objects with ``nice behavior"  has been introduced, including minimal surfaces with embedded planer ends (\cite{Bryant1984}, \cite{Bryant1988}) as well as holomorphic curves or anti-holomorphic curves in the twistor bundle of $S^4$ (\cite{Ejiri1988}, \cite{Mon}, \cite{Mus1}).

Contributions of Bryant's work also include proof of the harmonicity of the {\em conformal Gauss map} of a Willmore surface, the introduce of dual (Willmore) surfaces and holomorphic forms related to Willmore surfaces \cite{Bryant1984}. To study Willmore surfaces in higher dimensional spheres $S^{n+2}$, Ejiri generalized Bryant's definition of conformal Gauss map and showed the equivalence of surfaces being Willmore and having a harmonic conformal Gauss map. However the duality properties of Willmore surfaces no longer hold for general Willmore surfaces in $S^{n+2}$ when $n>1$ (for a generalization of duality we refer to \cite{BFLPP}, \cite{Ma} and \cite{Ma2006}). To use the duality properties, Ejiri introduced the notion of S-Willmore surfaces and proved that a Willmore surface has a dual (Willmore) surface if and only if it is S-Willmore. Moreover, Ejiri provided a classification of S-Willmore (Willmore) two-spheres in $S^{n+2}$ by the construction of holomorphic forms for these surfaces \cite{Ejiri1988}. Especially, for Willmore two-spheres in $S^4$, the existence of a certain holomorphic 4-form shows that they are S-Willmore automatically (\cite{Ejiri1988}, \cite{Mus1}, \cite{Mon}, \cite{BFLPP}, \cite{Ma}).

Since Ejiri's work in 1988, it has been an open question that whether are there Willmore two-spheres in $S^{n+2}$ which are not S-Willmore. If such Willmore two-spheres  exist,  then one needs to show how to construct and to characterize them.\\

As stated in the title, as an application of \cite{DoWa1} and \cite{DoWa2}, this paper aims to provide a coarse classification of Willmore two-spheres by the
loop group method. Moreover, in \cite{Wang-3}, the construction of new Willmore spheres will be presented. Moreover, a concrete, new, isotropic Willmore two-sphere in $S^6$, without any dual surface, is constructed as an illustration of the theory \cite{Wang-3}.

The techniques we apply here are based on the classification theory of harmonic maps of finite unition type \cite{BuGu}, \cite{Gu2002},
 and the loop group method for the construction of (conformally) harmonic maps \cite{DPW}, \cite{DoWa1}.
 The vital idea is that, since there has been a complete description of harmonic two-spheres in compact symmetric spaces (\cite{Uh}, \cite{BuGu}, \cite{Gu2002}), one should be able to modify this method to give a classification as well as examples of the conformal Gauss map of Willmore two-spheres.
  Along this way one will obtain the classification  as well as new examples of Willmore two-spheres.
  There are several difficulties one has to overcome before one is able to implement this idea. The first one is that the target manifold of the conformal Gauss map is a non-compact symmetric space.  The second one is how to read off the properties of Willmore surfaces from the harmonic conformal Gauss maps. The third one is how to modify the theory of Burstall and Guest to harmonic maps into non-compact Lie groups in the framework of DPW. And the last one is how to do the explicit Iwasawa decompositions required by the loop group method. The first two problems has been solved in \cite{DoWa1} and the third problem has been solved in \cite{DoWa2}.  This leads to the possibility to solve the problem considered in this paper under the framework of \cite{DoWa1},
by using the method introduced in \cite{DPW} and \cite{BuGu} (see \cite{DoWa2}).

According to the theory due to Uhlenbeck \cite{Uh}, Segal \cite{Segal}, Burstall and Guest \cite{BuGu,Gu2002},
any harmonic map from $S^2$ to $G/K$ is of finite uniton type, where $G/K$ a compact or non-compact inner symmetric space (For the case of non-compact $G/K$, see \cite{DoWa1,DoWa2}). Moreover, for any harmonic map of finite uniton type, there exists a normalized potential which takes values in some nilpotent Lie subalgebra. And the corresponding
normalized framing takes values in the corresponding Lie subgroup, which can be determined by the recipe of \cite{BuGu}.
 Therefore, a classification of Willmore two spheres in $S^{n+2}$ is equivalent to classify the corresponding nilpotent Lie subalgebras related to the Grassmannian
 where the conformal Gauss map takes value in.
 Since in the Willmore case, $G/K=SO^+(1,n+3)/(SO^+(1,3)\times SO(n))$, we will classify by the procedure stated in \cite{BuGu} all nilpotent Lie subalgebras which are associated with this symmetric space.  See Theorem \ref{thm-potential} for details.  Especially, as an application, we can obtain a classification of Willmore 2-spheres into $S^6$ in Theorem \ref{thm-potential-s6}.
 To read off concrete inform about the harmonic maps of Willmore surfaces as well as the explicit expressions of these Willmore two-spheres,
 one needs to carry out Iwasawa decompositions in a concrete fashion. Since these involve many tedious computations, we leave them  to  \cite{Wang-2,Wang-3}.

We also notice that there have been recently several publications on harmonic maps of finite uniton type into compact groups like $U(n)$ and $G_2$ (see \cite{Co-Pa}, \cite{FSW}, and \cite{S-W} and reference therein). Here we follow the spirits of Burstall-Guest and DPW, which is different from the work just mentioned. Another important different point is that the group we consider is non-compact while the groups appeared in \cite{Co-Pa, FSW, S-W} are compact. \\

This paper is organized as follows. We first review the most necessary background and formulations briefly in Section 2.
Then Section 3 contains the main result and its applications,
including the classification of nilpotent normalized potentials,
applications to Willmore two-spheres in $S^6$,
and examples of Willmore surfaces of finite uniton type.
 Section 4 ends the paper with a technical proof of the main result.

\section{Review of Willmore surfaces and loop group methods}

In this section, for the readers' convenience, we will collect the necessary results Willmore surfaces and the loop group methods used in this paper. We refer to \cite{DoWa1} and \cite{DoWa2} for more details. For the loop group theory, see also \cite{BuGu, DPW, Gu2002, Uh}.

\subsection{Willmore surfaces and strongly conformally harmonic maps}

we first recall some basic notation and results of \cite{DoWa1}.
Note the treatment in \cite{DoWa1} is different from the ones applied in \cite{Helein,Helein2} and \cite{Xia-Shen}, where  a different kind of harmonic maps (first introduced by H\'{e}lein \cite{Helein}) are mainly used.

Let $\mathbb{R}^{n+4}_1$ be the Lorentz-Minkowski space equipped with the Lorentzian metric
$\langle x,y\rangle=-x_{0}y_0+\sum_{j=1}^{n+3}x_jy_j=x^t I_{1,n+3} y,\ \  I_{1,n+3}=diag\{-1,1,\cdots,1\}, \ \
  x,y\in\R^{n+4}.$
Let $\mathcal{C}^{n+3}_+= \lbrace x \in \mathbb{R}^{n+4}_{1} |\langle x,x\rangle=0 , x_0 >0 \rbrace $
denote the forward light cone of $\mathbb{R}^{n+4}_{1}$ and $Q^{n+2}=\mathcal{C}^{n+3}_+/ \R^+=S^{n+2}$ denote the  projective light cone.
For a  conformal immersion $y:M\rightarrow S^{n+2}$ from a Riemann surface $M$, one has a canonical lift $Y=e^{-\omega}(1,y)$ into $\mathcal{C}^{n+3}$ with respect to a local complex coordinate $z$ of $M$, where $e^{2\omega}=2\langle y_z,y_{\bar{z}}\rangle$.
It is easily verified that there is  a global bundle decomposition
\begin{equation}
M\times \mathbb{R}^{n+4}_{1}=V\oplus V^{\perp}, \ \hbox{ with }\ V={\rm Span}\{Y,{\rm Re}Y_{z},{\rm Im}Y_{z},Y_{z\bar{z}}\},
\end{equation}
where $V^{\perp}$ denotes the orthogonal complement of $V$.
The complexifications of $V$ and $V^{\perp}$ are denoted by $V_{\mathbb{C}}$ and
$V^{\perp}_{\mathbb{C}}$ respectively.
Choose the frame $\{Y,Y_{z},Y_{\bar{z}},N\}$ of
$V_{\mathbb{C}}$, where $N$ is the uniquely determined section of $V$ over $M$ satisfying
$ \langle N,Y_{z}\rangle=\langle N,Y_{\bar{z}}\rangle=\langle
N,N\rangle=0,\langle N,Y\rangle=-1.$ Let $D$
denote the normal connection on $V_{\mathbb{C}}^{\perp}$. For any section $\psi\in
\Gamma(V_{\mathbb{C}}^{\perp})$ of the normal bundle
and a canonical lift $Y$ w.r.t $z$, we obtain
the structure equations:
\begin{equation}\label{eq-moving}
\left\{\begin {array}{lllll}
Y_{zz}=-\frac{s}{2}Y+\kappa,\\
Y_{z\bar{z}}=-\langle \kappa,\bar\kappa\rangle Y+\frac{1}{2}N,\\
N_{z}=-2\langle \kappa,\bar\kappa\rangle Y_{z}-sY_{\bar{z}}+2D_{\bar{z}}\kappa,\\
\psi_{z}=D_{z}\psi+2\langle \psi,D_{\bar{z}}\kappa\rangle Y-2\langle
\psi,\kappa\rangle Y_{\bar{z}},
\end {array}\right.
\end{equation}
Here $\kappa$ is  \emph{the conformal Hopf differential} of $y$,
and $s$ is \emph{the Schwarzian} of $y$ \cite{BPP}. For the structure equations \eqref{eq-moving} the integrability conditions are
the conformal Gauss, Codazzi and Ricci equations respectively:
\begin{equation}\label{eq-integ}
\left\{\begin {array}{lllll} \frac{1}{2}s_{\bar{z}}=3\langle
\kappa,D_z\bar\kappa\rangle +\langle D_z\kappa,\bar\kappa\rangle,\\
{\rm Im}(D_{\bar{z}}D_{\bar{z}}\kappa+\frac{\bar{s}}{2}\kappa)=0,\\
R^{D}_{\bar{z}z}\psi=D_{\bar{z}}D_{z}\psi-D_{z}D_{\bar{z}}\psi =
2\langle \psi,\kappa\rangle\bar{\kappa}- 2\langle
\psi,\bar{\kappa}\rangle\kappa.
\end {array}\right.
\end{equation}

\emph{The Willmore functional} of $y$ is
defined as  :
\begin{equation}\label{eq-W-energy}
W(y):=2i\int_{M}\langle \kappa,\bar{\kappa}\rangle \mathrm{d}z\wedge
\mathrm{d}\bar{z}.
\end{equation}
An immersed surface $y:M\rightarrow S^{n+2}$ is called a
\emph{Willmore surface}, if it is a critical point of the Willmore
functional with respect to any variation (with compact support) of the map $y:M\rightarrow
 S^{n+2}$.
  Set
\begin{equation}\label{eq-gauss}
Gr:=Y\wedge Y_{u}\wedge Y_{v}\wedge N=-2i\cdot Y\wedge Y_{z}\wedge
Y_{\bar{z}} \wedge N.
\end{equation}
It is easy to verify that $Gr$ is well defined. We call
\[Gr:M\rightarrow
Gr_{1,3}(\mathbb{R}^{n+4}_{1}) = SO^+(1,n+3)/SO^+(1,3)\times SO(n)\] \emph{the (oriented) conformal Gauss map} of
$y$ (See also  \cite{Bryant1984, BPP, Ejiri1988,Ma}). Here $SO^+(1,n+3)$ is the oriention-preserving and timelike direction-preserving isometry group of $\mathbb{R}^{n+4}_1$:
\begin{equation}\begin{split}
SO^+(1,n+3)=&\{T\in Mat(n+4,\R)\ | \\
  &~~~~\hspace{2mm} \langle Tx,Ty\rangle=\langle x,y\rangle, \forall x,y\in\mathbb{R}^{n+4}_{1}, \det T=1, T\mathcal{C}^{n+3}_+\subset\mathcal{C}^{n+3}_+\}.\\
  \end{split}
\end{equation}

  It is well-known that \cite{Bryant1984,BPP,Ejiri1988} $y$ is a Willmore surface if and only if
   its conformal Gauss map $Gr$   is a (conformally) harmonic map into
$G_{3,1}(\mathbb{R}^{n+3}_{1})$. \\

By \cite{Ejiri1988},  a Willmore immersion $y:M^2\rightarrow S^n$ is called an \emph{S--Willmore} surface if its conformal Hopf differential satisfies
$D_{\bar{z}}\kappa || \kappa,$
i.e. there exists some function $\mu$ on $M$ such that $D_{\bar{z}}\kappa+\mu\kappa=0$.
It is known that  $($\cite{Ejiri1988},  \cite{Ma}$)$ a Willmore surface $y$ is S--Willmore  if and only if it has a dual (Willmore) surface.

\begin{remark} Although there are restrictions for Willmore surfaces to have a dual surface, it is known that there exist special transforms for Willmore surfaces, called ``adjoint transforms", see \cite{Ma2006}, \cite{Ma} for details. These transformations have a close relation  with Helein's work \cite{Helein} (See  \cite{DoWa1} for some details).
\end{remark}

There exist Willmore surfaces which fail to be immersions at some points or even curves. To include surfaces of this type, we introduce the notion of {\em Willmore maps} and {\em strong  Willmore maps} as generalizations.
\begin{definition}
A smooth map $y$ from a Riemann surface $M$ to $S^{n+2}$ is called a Willmore map if it is a conformal Willmore immersion on an open dense subset $\hat{M}$ of $M$. The points in $M_0=M\backslash \hat{M}$ are called branch points of $y$.

Moreover, $y$ is called a strong Willmore map if the conformal Gauss map $Gr: \hat{M}\rightarrow SO^+(1,n+3)/SO^+(1,3)\times SO(n)$ of $y$ can be extended smoothly to $M$.
\end{definition}

As a consequence, one can go from a strong Willmore map to a harmonic conformal map. To describe such harmonic maps and to characterize those harmonic maps related to Willmore surfaces, we first recall from \cite{DoWa1} that for any strong Willmore map $y$, locally on $U\subset M$ we can choose a frame
\begin{equation}\label{F}
F:=\left(\frac{1}{\sqrt{2}}(Y+N),\frac{1}{\sqrt{2}}(-Y+N),e_1,e_2,\psi_1,\cdots,\psi_n\right)\in SO^+(1,n+3)
\end{equation}
with the Maurer-Cartan form
 \[\alpha=F^{-1}\mathrm{d}F=\left(
                   \begin{array}{cc}
                     A_1 & B_1 \\
                   -B_1^tI_{1,3} & A_2 \\
                   \end{array}
                 \right)\mathrm{d}z+\left(
                   \begin{array}{cc}
                     \bar{A}_1 & \bar{B}_1 \\
                    -\bar{B}_1^tI_{1,3}& \bar{A}_2 \\
                   \end{array}
                 \right)\mathrm{d}\bar{z},\]
where
\begin{equation} B_1=\left(
      \begin{array}{ccc}
         \sqrt{2} \beta_1 & \cdots & \sqrt{2}\beta_n \\
         -\sqrt{2} \beta_1 & \cdots & -\sqrt{2}\beta_n \\
        -k_1 & \cdots & -k_n \\
        -ik_1 & \cdots & -ik_n \\
      \end{array}
    \right). \end{equation} Here $\{\psi_j\}$ is an orthonormal basis of $V^{\perp}$ on $U$ and
  $\kappa=\sum k_j\psi_j$, $D_{\bar{z}}\kappa=\sum_j\beta_j\psi_j$. And the entries of $A_1$ and $A_2$ are determined by $s$, $\kappa$ and the normal connection (See \cite{DoWa1}).

  It is easy to see that $B_1$ has a special form. Moreover, a direct computation shows $$B_1^t I_{1,3}B_1=0.$$
In \cite{DoWa1}  this property of $B_1$  plays an important role in the characterization of harmonic maps related to Willmore surfaces . We also point out that  $y$ is S--Willmore if and only if $rank(B_1)=1$ on an open dense subset of $M$.

    Conversely, assume we have the frame $F=(\phi_1,\cdots,\phi_4,\psi_1,\cdots,\psi_{n+4}):U\rightarrow SO^+(1,n+3)$ of some immersion on $U\subset M$,
such that the Maurer-Cartan form $\alpha=F^{-1}\mathrm{d}F$  of $F$ is of the above form,
then
\begin{equation}y=\pi_0(F)=:\left[ (\phi_1-\phi_2)\right]
\end{equation}
is a conformal immersion from $U$ into $Q^{n+2}\cong S^{n+2}$ (with canonical lift $\frac{1}{\sqrt{2}}(\phi_1-\phi_2)$).

Next, the symmetric space $SO^+(1,n+3)/SO^+(1,3)\times SO(n)$ is defined by the involution
\begin{equation} \sigma:SO^+(1,n+3)\rightarrow SO^+(1,n+3), \ \ \ ~~\sigma (A)=DAD^{-1},
\end{equation}
with $D=\hbox{diag}\{-I_4,I_n\}$. Then
\[\begin{split}&\mathfrak{k}=\left\{\left.\left(\begin{array}{cc}
                     A_1 &0 \\
                    0 & A_2 \\
                   \end{array}
                 \right)\right|I_{1,3}A_1+A_1^tI_{1,3}=0, A_2^t+A_2=0\right\},\\ &\mathfrak{p}=\left\{\left.\left(\begin{array}{cc}
                     0 &B_1 \\
                    -B_1^tI_{1,3} & 0 \\
                   \end{array}
                 \right)\right|B_1\in Mat(4\times n, \mathbb{R})\right\}.
                 \end{split}\]
Let $\mathcal{F}: M\rightarrow SO^+(1,n+3)/SO^+(1,3)\times SO(n)$ be a harmonic map. Then it has a local lift $F:U\rightarrow SO^+(1,n+3)$, and the Maurer-Cartan form $\alpha=F^{-1}\mathrm{d}F$ of $F$ is of the form $$\alpha=\left(
                   \begin{array}{cc}
                     A_1 & B_1 \\
                     B_2 & A_2 \\
                   \end{array}
                 \right)\mathrm{d}z+\left(
                   \begin{array}{cc}
                     \bar{A}_1 & \bar{B}_1 \\
                     \bar{B}_2 & \bar{A}_2 \\
                   \end{array}
                 \right)\mathrm{d}\bar{z}.$$

\begin{definition}\cite{DoWa1} $\mathcal{F}$ is called  a {\bf strongly conformally harmonic map} if for any point $p\in M$, there exists a neighborhood $U_p$ of $p$ and a frame $F$ (with Maurer-Cartan forms $\alpha$) of $\mathcal{F}$ on $U_p$ satisfying
\begin{equation}\label{eq-Willmore harmonic}B_1^t I_{1,3} B_1 = 0.\end{equation}
 \end{definition}

\begin{remark}\
\begin{enumerate}  \item We see that the conformal Gauss map of any Willmore surface is a strongly conformally harmonic map. Conversely,
Theorem 3.10 of \cite{DoWa1} shows that from a strongly conformally harmonic map, one either obtains a (branched) Willmore surface, or a constant map. In \cite{Wang-2}, we have classified the potentials of all strongly conformally harmonic maps yielding a constant map. Moreover, we prove that Willmore surfaces having such conformal Gauss maps must be conformally equivalent to some minimal surfaces in $\R^m$. Together with this result,  we obtain a way to characterize all Willmore surfaces globally by strongly conformally harmonic maps.

\item The conformal Gauss map defined in \eqref{eq-gauss} maps every point to an oriented 4--dimensional Lorentzian subspace. So in general one need to be careful with the orientation \cite{DoWa1}. Note that the orientation can be reversed by use of conjugation by some element in $O(1,3)\times O(n)$. In our cases, since we are always dealing with the harmonic maps on the Lie algebra level, and the conjugation will not change too much, so in many cases, we will ignore this difference and just state our results up to a conjugation of some element in $O(1,3)\times O(n)$.
\end{enumerate}\end{remark}
\subsection{The DPW method and Burstall-Guest theroy for harmonic maps of finite uniton type}

It is well known that harmonic maps into symmetric spaces inherit some $S^1-$parameter (loop) symmetries , which provides a way to describe them using  algebraic tools \cite{Uh}, \cite{DPW}. To be concrete, the basic idea is to describe  harmonic maps into a symmetric space $G/K$ by some special meromorphic 1-forms, via the Birkhoff decomposition and the Iwasawa decomposition of the loop group associated with $G$.
Moreover, for harmonic maps of finite uniton type, the theory of Burstall Guest can be summarised briefly that the corresponding special meromorphic 1-forms take values in some nilpotent Lie sub-algebras related to the symmetric space. Therefore, the classification of  harmonic maps of finite uniton type reduces to the classification of  the special nilpotent Lie sub-algebras. Together with the global relations between Willmore surfaces and harmonic maps, we are able to derive a classification of Willmore two-spheres in this way.

  \subsubsection{The DPW construction for harmonic maps}
Let $G$ be a connected, real, semi-simple non-compact matrix Lie group. Let $G/K$ be the inner symmetric space defined by the involution $\sigma: G\rightarrow G$, with $Fix^{\sigma} G\supseteq K \supseteq (Fix^{\sigma} G )^\circ$. Here $H^\circ$ denotes the identity component of the group $H$.
Let $\mathfrak{g}$, $\mathfrak{k}$ be the Lie algebras of $G$ and $K$. The Cartan decomposition shows that $$\hbox{$\mathfrak{g}=\mathfrak{k}\oplus\mathfrak{p}$  with
$ [\mathfrak{k},\mathfrak{k}]\subset\mathfrak{k},
~~~ [\mathfrak{k},\mathfrak{p}]\subset\mathfrak{p}, ~~~
[\mathfrak{p},\mathfrak{p}]\subset\mathfrak{k}.$}$$
Let $\pi:G\rightarrow G/K$ denote the projection of $G$ onto $G/K$.

 Let $\mathcal{F}:M\rightarrow G/K$ be a conformal harmonic map.
Let $U\subset M$ be an open contractible subset.
Then there exists a frame $F: U\rightarrow G$ such that $\mathcal{F}=\pi\circ F$. One has the Maurer-Cartan form $\alpha=F^{-1} \dd F$ and the Maurer-Cartan equation $\dd \alpha+\frac{1}{2}[\alpha\wedge\alpha]=0.$
Moreover, decomposing $\alpha$ with respect to $\mathfrak{g}=\mathfrak{k}\oplus\mathfrak{p}$ and  the complexification $T^*M^{\mathbb{C}}=T^*M'\oplus T^*M''$, we obtain
\[\alpha=\alpha_{\mathfrak{p}}'+\alpha_{ \mathfrak{k}  } +\alpha_{\mathfrak{p}}'', \hbox{ with }
\alpha_{\mathfrak{k  }}\in \Gamma(\mathfrak{k}\otimes T^*M),\
\alpha_{ \mathfrak{p }}'\in \Gamma(\mathfrak{p}^{\C}\otimes T^*M'),\
\alpha_{ \mathfrak{p }}''\in \Gamma(\mathfrak{p}^{\C}\otimes T^*M'').\] Set
$
\alpha_{\lambda}=
\lambda^{-1}\alpha_{\mathfrak{p}}'+\alpha_{\mathfrak{k}}+
\lambda\alpha_{\mathfrak{p}}'',$  $\lambda\in S^1.$
It is well-known that (\cite{DPW}) the map  $\mathcal{F}:M\rightarrow G/K$ is harmonic if and only if
\begin{equation}\label{integr}\dd
\alpha_{\lambda}+\frac{1}{2}[\alpha_{\lambda}\wedge\alpha_{\lambda}]=0,\ \ \hbox{for all}\ \lambda \in S^1.
\end{equation}
 From this for $\mathcal{F}$ we infer that on $U$ there exists a unique solution $F(z,\bar z,\lambda)$  to the equation $\dd F(z,\bar z,\lambda)= F(z,\bar z,\lambda)\alpha_{\lambda},\ F(z_0,\bar z_0,\lambda)=F(z_0,\bar z_0)\in K,~z_0 \in U$.  The solution   $F(z, \lambda)$
is called the {\em extended frame} of the harmonic map $\mathcal{F}$ (normalized at the base point $z=z_0$ if $F(z_0,\bar z_0,\lambda)=e$).
 Then  $\mathcal{F}_{\lambda}:=F(z,\bar z,\lambda)\mod K$ is harmonic for every $\lambda \in S^1$ and this family of harmonic maps will be called the ``associated family'' of the harmonic map
$\mathcal{F}.$

Recall that the twisted loop groups of $G$ and $G^{\mathbb{C}}$  are defined as follows:
\begin{equation*}
\begin{array}{llll}
\Lambda G^{\mathbb{C}}_{\sigma} &=\{\gamma:S^1\rightarrow G^{\mathbb{C}}~|~ ,\
\sigma \gamma(\lambda)=\gamma(-\lambda),\lambda\in S^1  \},\\[1mm]

\Lambda G_{\sigma}  &=\{\gamma\in \Lambda G^{\mathbb{C}}_{\sigma}
|~ \gamma(\lambda)\in G, \hbox{for all}\ \lambda\in S^1 \},\\[1mm]
\Lambda_{*}^{-} G^{\mathbb{C}}_{\sigma}  &=\{\gamma\in \Lambda G^{\mathbb{C}}_{\sigma}~
|~ \gamma \hbox{ extends holomorphically to the domain } |\lambda|>1,\  \gamma(\infty)=e \},\\[1mm]

\Lambda^{+} G^{\mathbb{C}}_{\sigma}  &=\{\gamma\in \Lambda G^{\mathbb{C}}_{\sigma}~
|~ \gamma \hbox{ extends holomorphically to the disk} \hspace{1mm} |\lambda|>1,  \},\\

\Lambda_L^{+} G^{\mathbb{C}}_{\sigma}  &=\{\gamma\in \Lambda G^{\mathbb{C}}_{\sigma}~
|~ \gamma \hbox{ extends holomorphically to the disk} \hspace{1mm} |\lambda|>1,  \gamma(0)\in L\},\\

\end{array}\end{equation*}
where $L\subset K^{\C}$ is a subgroup.  For the loop groups related to Willmore surfaces \cite{DoWa1}, the  Iwasawa decomposition states \cite{DPW,DoWa1} that there exists a closed, connected solvable subgroup $S \subseteq K^\C$ such that
the multiplication $\Lambda G_{\sigma}^{\circ} \times \Lambda^{+}_S G^{\mathbb{C}}_{\sigma}\rightarrow
\Lambda G^{\mathbb{C}}_{\sigma}$ is a real analytic diffeomorphism onto the open subset
$ \Lambda G_{\sigma}^{\circ} \cdot \Lambda^{+}_S G^{\mathbb{C}}_{\sigma}  \subset(\Lambda G^{\mathbb{C}}_{\sigma})^{\circ}$.
The  Birkhoff decomposition states that
the multiplication $\Lambda_{*}^{-} {G}^{\mathbb{C}}_{\sigma}\times
\Lambda^{+}_{ K^{\C}} {G}^{\mathbb{C}}_{\sigma}\rightarrow
\Lambda {G}^{\mathbb{C}}_{\sigma}$ is an analytic  diffeomorphism onto the
open and dense subset $\Lambda_{*}^{-} {G}^{\mathbb{C}}_{\sigma}\cdot
\Lambda^{+}_{ K^{\C}} {G}^{\mathbb{C}}_{\sigma}$ {\em ( big Birkhoff cell )}.

Let $\D$ be  the unit disk or $\C$ itself, with complex coordinate $z$. The DPW construction can be stated as follows
\cite{DPW}, \cite{DoWa1}, \cite{Wu}.
\begin{enumerate}
\item
Let $\mathcal{F}: \D \rightarrow G/K$ be a harmonic map with an extended frame $F(z,\bar z,\lambda)$ satisfying  $F(0,0,\lambda)=e$.
Then there exists a Birkhoff decomposition
$F(z,\bar z,\lambda)=F_-(z,\lambda)  F_+(z,\bar z,\lambda)$ with
$ F_+(z,\bar z,\lambda):\D \rightarrow\Lambda^{+}_{\FC} G^{\mathbb{C}}_{\sigma},$
such that
 $ F_-(z,\lambda):\D \rightarrow\Lambda_{*}^{-} G^{\mathbb{C}}_{\sigma}
$ is meromorphic in $z $ on $ \D$ and satisfies $F_-(0, \lambda) = e.$ Moreover, its Maurer-Cartan form is of the form
\[\eta= F_-(z,\lambda)^{-1} \dd F_-(z,\lambda)=\lambda^{-1}\eta_{-1}(z)\dd z\]
with $\eta_{-1}(z)$ independent of $\lambda$. $\eta$ is called the {\bf normalized potential} of $\mathcal F$.

\item Conversely,  Let $\eta$ be a $\lambda^{-1}\cdot\mathfrak{p}^{\mathbb{C}}-\hbox{valued}$ meromorphic $(1,0)$--form. Let $F_-(z,\lambda)$ be a solution to $F_-(z,\lambda)^{-1} \dd F_-(z,\lambda)=\eta, ~ F_-(z_0,\lambda)=e$
which is meromorphic on $\D$. Then on the open subset $\D_{\mathcal{I}}$  of $\D$  consisiting of all points in $\D$ which are not poles of $F_-$, we have by the Iwasawa decomposition
\[F_-(z,\lambda)=\tilde{F}(z,\bar z,\lambda) \tilde{F}_+(z,\bar z,\lambda)^{-1}, \]
$\hbox{with } \tilde{F}(z,\bar z,\lambda)\in\Lambda G_{\sigma},\  \tilde{F}_+(z,\bar z,\lambda)\in\Lambda^{+}_\FC {G}^{\mathbb{C}}_{\sigma},$ $\tilde{F}(0,0,\lambda) = e$ and $ \tilde{F}_+(0,0, \lambda) = e.$
 Then $\tilde{F}(z,\bar z,\lambda) $ is an extended frame  of some harmonic map from
 $\D_{\mathcal{I}}$  to $G/K$ satisfying   $\tilde{F}(0,0,\lambda)=e$. Moreover, the two constructions above are inverse to each other.
\end{enumerate}

\subsection{Burstall-Guest theroy for harmonic maps of finite uniton type in terms of DPW}

It is well-known due to Uhlenbeck \cite{Uh}, Burstall and Guest \cite{BuGu} that all harmonic two spheres into semi-simple compact Lie groups are of finite uniton type (we refer to \cite{Uh}, \cite{BuGu}, \cite{Gu2002} and \cite{DoWa2} for more details). Moreover, this statement stay true for harmonic maps two spheres into inner, compact symmetric spaces \cite{BuGu}. In \cite{DoWa2}, it was shown to stay true for two spheres
harmonic maps two spheres into inner, non--compact symmetric spaces.

Furthermore, in \cite{BuGu}, \cite{Gu2002}, a way to derive such harmonic maps two spheres are provided. In terms of DPW method, it can be stated as follows.

Now let us turn to the  Burstall-Guest theory for harmonic maps  of finite uniton type into the symmetric space related to Willmore surfaces. Let $U/U\cap K^{\C}$ be the compact dual of $G/K$ \cite{DoWa2}. Let $\mathrm{T}\subset  U\cap{K}^{\C}$ be some maximal torus of $G^{\mathbb{C}}$  with $\mathfrak{t}$ the Lie algebra of $\mathrm{T}$. For any $\gamma_{\xi}\in \mathcal{I}:=(2\pi)^{-1}\exp^{-1}(e)\cap\mathfrak{t}$, define
\begin{equation}\gamma_{\xi}(\lambda):=\exp(t\xi),\ \ \hbox{ for all }\ \ \lambda=e^{it}\in S^1.
\end{equation}
Note that $\gamma_{\xi}(\pi)^2=\gamma_{\xi}(2\pi)=e$.
Let $C_0$ be a fundamental Weyl chamber of $\mathfrak{t}$. Set $\mathcal{I}'=C_0\cap \mathcal{I}$. Then $\mathcal{I}'$ parameterizes the
conjugacy classes of homomorphisms $S^1\rightarrow G$.
Let $\Delta$ be the set of roots of $\mathfrak{g}^{\mathbb{C}}$. Decompose $\Delta$ as $\Delta=\Delta^-\cup\Delta^+$ according to $C_0$.
 Let $\theta_1,\cdots,\theta_l\in \Delta^{+}$ be the simple roots. We denote by $\xi_1,\cdots,\xi_l\in \mathfrak{t}$ the basis of $\mathfrak{t}$ which is dual to
$\theta_1,\cdots,\theta_l$ in the sense that $\theta_j(\xi_k)=\sqrt{-1}\delta_{jk}$.
By \cite{BuGu} (page 555), an element $\xi$ in $\mathcal{I}'\backslash\{0\}$ is called a {\em canonical} element, if $\xi=\xi_{j_1}+\cdots+\xi_{j_k}$ with $\xi_{j_1},\cdots,\xi_{j_k}\in \{\xi_1,\cdots,\xi_l\}$ pairwise different.
For $\theta\in\Delta$ and $X\in\mathfrak{g}_{\theta}$ we obtain
$ad\xi X=\theta(\xi)X\ \ \hbox{ and }\ \theta(\xi)\in\sqrt{-1}\mathbb{Z}.$
Let $\mathfrak{g}^{\xi}_{j}$ be the $\sqrt{-1}\cdot j-\hbox{eigenspace}$ of $\hbox{ad}\xi$. Then
 \begin{equation}
\mathfrak{g}^{\xi}_j=\underset{\theta(\xi)=\sqrt{-1} j}{\oplus}\mathfrak{g}_{\theta}, \hbox{ and } \mathfrak{g}^{\mathbb{C}}=\underset{j}{\oplus}\ \mathfrak{g}^{\xi}_j.
\end{equation}

\begin{theorem}\label{thm-BG}  Theorem 4.13, 4.18 and Theorem 5.3 of \cite{DoWa2}, compare also \cite{BuGu} and \cite{Gu2002}.
\begin{enumerate}
  \item Let $\mathcal{F}:M\rightarrow G/K$ be a harmonic map of finite uniton number.  Then there exists some canonical $\xi\in \mathcal{I}'$,
some discrete subset $D'\subset M$, such that
$G/K\cong \{g(\exp \pi \xi)g^{-1}| g\in G\}$
and  $F_-:=\gamma_{\xi}^{-1}\exp C \gamma_{\xi}$ is a meromorphic extended frame of $\mathcal{F}$ with the normalized potential having the form
\begin{equation}\eta=F_-^{-1}\mathrm{d}F_-=\lambda^{-1}\eta_{-1}\dd z=\lambda^{-1}\sum_{0\leq 2j\leq r(\xi)-1} A_{2j}'\mathrm{d}z, ~ F_-(z_0,\bar{z}_0,\lambda)=e \hbox{ for some }z_0\in M, \end{equation}
where $A_{2j}':M\setminus D'\rightarrow \mathfrak{g}^{\xi}_{2j+1}$ is a {\em meromorphic map}.
\item  Conversely, given a  meromorphic normalized potential $\eta$ taking values in  $\lambda^{-1}\cdot\sum_{0\leq 2j\leq r(\xi)-1}\mathfrak{g}^{\xi}_{2j+1}$,
 the Iwasawa decomposition of the solution $F_-$ to
 \begin{equation}\label{eq-initial3}F_-^{-1}\mathrm{d}F_-=\eta,~ F_-(z_0,\bar{z}_0,\lambda)=e \hbox{ for some }z_0\in M, \end{equation}
{gives the extended frame of a harmonic} map of finite uniton number into $ G/ K $.

 \item Let $\mathcal{F}:S^2\rightarrow G/K$ be a harmonic map. Then it is of finite uniton number.
\end{enumerate}
\end{theorem}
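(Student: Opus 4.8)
The plan is to treat part (3) --- that any harmonic $\mathcal{F}:S^2\to G/K$ is of finite uniton number --- as the core, and then to read off the canonical element $\xi$ and the polynomial shape of the normalized potential in (1)--(2) from the Burstall--Guest normal form. For (3) I would combine global integrability over the simply connected surface $S^2$ with a compactness argument in Segal's Grassmannian model of the loop group, and handle the non-compact target by descent to its compact dual.

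First, since $S^2=\mathbb{P}^1$ is simply connected and $\mathcal{F}$ is smooth, the loop of Maurer--Cartan forms $\alpha_\lambda$ from \eqref{integr} is globally defined and integrates to a single-valued extended frame $F(z,\bar z,\lambda)$ on all of $S^2$, normalized at a base point $z_0$. Thus one global extended solution governs the entire $S^1$-family $\mathcal{F}_\lambda$, so the uniton number of $\mathcal{F}$ is well defined; the whole issue is to bound it. Next I would pass to the Grassmannian picture of \cite{Segal,BuGu}: the extended solution determines a holomorphic map $\Psi:S^2\to\mathcal{G}$ into the loop-group Grassmannian $\mathcal{G}$, which is exhausted by an increasing chain $\mathcal{G}=\bigcup_{k\ge0}\mathcal{G}_{(k)}$ of closed, finite-dimensional subvarieties indexed by the uniton number $k$. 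Since $\Psi$ is continuous and $S^2$ is compact and connected, $\Psi(S^2)$ is compact and therefore contained in a single $\mathcal{G}_{(k)}$ for some finite $k$; this is exactly the assertion that $\mathcal{F}$ is of finite uniton number, with a bound depending only on $G/K$.

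As a byproduct the normalized potential $\eta=\lambda^{-1}\eta_{-1}(z)\,\dd z$ has $\eta_{-1}$ meromorphic on the compact $\mathbb{P}^1$, hence \emph{rational}; and the eigenspace decomposition $\mathfrak{g}^{\mathbb{C}}=\bigoplus_j\mathfrak{g}^{\xi}_j$ of $\mathrm{ad}\,\xi$ supplies, as in \cite{BuGu,Gu2002}, the canonical element $\xi\in\mathcal{I}'$ together with the expansion of $\eta_{-1}$ into the components $A'_{2j}$ with $0\le 2j\le r(\xi)-1$ of parts (1) and (2). Conversely, starting from such an $\eta$, the Iwasawa step of the DPW recipe returns an extended frame whose finite $\lambda$-degree is forced by the bound $r(\xi)$, giving (2).

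The hard part will be the non-compactness of $G$. The extended frame a priori lands in the non-compact loop group $\Lambda G_\sigma$, so the Burstall--Guest compactness argument, formulated for compact $G$, does not apply verbatim: one must still know that the strata $\mathcal{G}_{(k)}$ are finite-dimensional and closed and that $\Psi(S^2)$ sits in the algebraic part. I would resolve this by descending to the compact dual $U/(U\cap K^{\mathbb{C}})$: since $G/K$ is inner and $\mathfrak{g}^{\mathbb{C}}=\mathfrak{u}^{\mathbb{C}}$, the $\lambda$-dependence of the \emph{complexified} extended solution --- which is precisely what the uniton number measures --- is common to $G/K$ and to its compact dual, so the finiteness proved by Uhlenbeck and Burstall--Guest in the compact case \cite{Uh,BuGu} transfers. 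The last technical point, controlled by the Iwasawa and reality structure of \cite{DoWa1,DoWa2}, is to verify that the non-compact real frame recovered from the same complexified data by Iwasawa decomposition indeed inherits bounded $\lambda$-degree, so that finite uniton number survives the passage back to $G$.
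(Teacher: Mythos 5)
The paper offers no proof of this theorem at all: it is stated as a direct quotation of Theorems 4.13, 4.18 and 5.3 of \cite{DoWa2} (together with \cite{BuGu}, \cite{Gu2002}), so there is nothing internal to compare your argument against. Judged on its own terms, your sketch correctly identifies where the proof lives, but the two steps you rely on most heavily are not actually arguments. First, for part (3), the inference ``$\Psi(S^2)$ is compact, hence contained in a single stratum $\mathcal{G}_{(k)}$'' is circular: the strata $\mathcal{G}_{(k)}$ exhaust only the \emph{algebraic} part of the restricted Grassmannian, and the whole content of the Uhlenbeck--Segal theorem is to show that the extended solution of a harmonic two-sphere lands in that algebraic part in the first place (via the energy-induction of \cite{Uh} or Segal's analysis of the meromorphic generation of $W$ and the inclusions $\lambda^N H_+\subseteq W\subseteq \lambda^{-N}H_+$). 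Mere compactness of the image gives nothing, since a compact subset of the full restricted Grassmannian need not meet the algebraic part at all. You acknowledge this (``one must still know \dots that $\Psi(S^2)$ sits in the algebraic part'') but then do not supply the missing step, which is the theorem.

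Second, the reduction to the compact dual is the actual mathematical content of \cite{DoWa2} and cannot be dispatched by saying the $\lambda$-dependence ``is common to $G/K$ and to its compact dual, so the finiteness transfers.'' A harmonic map into the non-compact $G/K$ does not induce a harmonic map into $U/(U\cap K^{\C})$ pointwise; the two are related only through the shared holomorphic data (the normalized potential) under the DPW correspondence, and for non-compact real forms the loop-group Iwasawa decomposition is merely a diffeomorphism onto an \emph{open} subset of $(\Lambda G^{\C}_{\sigma})^{\circ}$, so one must prove that the frame recovered from the complexified data exists off a suitable singular set and still has finite $\lambda$-degree. Likewise, parts (1)--(2) --- the existence of a canonical element $\xi$ and the normal form $F_-=\gamma_{\xi}^{-1}\exp C\,\gamma_{\xi}$ with $\eta_{-1}$ valued in $\sum_{0\le 2j\le r(\xi)-1}\mathfrak{g}^{\xi}_{2j+1}$ --- are not read off from the eigenspace decomposition of $\operatorname{ad}\xi$; they require the Burstall--Guest deformation of extended solutions into Bruhat normal form. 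In short, your proposal is a correct map of the citation trail rather than a proof; given that the paper itself imports the result, the honest course is to do the same, or else to reproduce the arguments of \cite{Uh}, \cite{Segal}, \cite{BuGu} and \cite{DoWa2} in full.
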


\begin{remark}\
\begin{enumerate}
  \item
It is obvious that $\eta_{-1}$ takes value in $\sum \mathfrak{g}^{\xi}_{2j+1}$, which is in a nilpotent Lie subalgebra of $\mathfrak{g}^{\C}$.
This explains what we mean before. Moreover, it is straightforward to see that $F_-$ can be derived by finite many times iterations (See for example \cite{Gu2002}).
The Iwasawa decomposition of  $F_-$ can also be done in finite procedures. Therefore this provides an explicit way to obtain such harmonic maps. Furthermore, some geometric properties of the harmonic maps can also be derived this way.  See for instance \cite{Gu2002}, \cite{Wang-2} and \cite{Wang-3} for concrete illustrations.

\item From this theorem, one can derive all normalized potentials related to the harmonic maps of finite uniton type, by classifying all the  nilpotent Lie subalgebras related to the given symmetric space. To classify these nilpotent Lie  subalgebras, it suffices to classify the corresponding $\xi$. These will be the main content of the next sections.

\end{enumerate}\end{remark}

 \section{ Coarse classification of Willmore surfaces of finite uniton type }

Now we consider  strongly conformally harmonic maps of finite uniton type in  $SO^+(1,n+3)/SO^+(1,3)\times SO(n)$.
Due to the seminal work of \cite{BuGu} (see also \cite{Gu2002}), and also Theorem 5.3 of \cite{DoWa2},  harmonic maps of finite uniton type into any inner symmetric space, compact or non-compact, can be described by some nilpotent Lie subalgebra valued meromorphic 1-forms. Applying this to strongly conformally harmonic maps of finite uniton type in  $SO^+(1,n+3)/SO^+(1,3)\times SO(n)$,  the nilpotent Lie subalgebras need to be related to the symmetric space $SO^+(1,n+3)/SO^+(1,3)\times SO(n)$ \cite{BuGu}. This will provide the forms of the (nilpotent) normalized potentials  of strongly conformally harmonic maps of finite uniton type. Moreover, the conditions on $B_1$ will give further restrictions. Altogether, we will obtain a description of the normalized potentials of  strongly conformally harmonic maps of finite uniton type, and hence  the  normalized potentials of  Willmore surfaces of finite uniton type. Especially, we obtain a description of the normalized potentials of all Willmore two-spheres. To be concrete, we have the following theorem.

 \subsection{The general case}  Let $\D$ be a contractible open subset of $\C$ with complex coordinate $z$.
\begin{theorem}\label{thm-potential}
Let $\mathcal{F}:\D\rightarrow SO^+(1,n+3)/SO^+(1,3)\times SO(n)$ be a strongly conformally harmonic map of finite-uniton type, with $n+4=2m$. Then  up to  conjugation of some matrix in $O^+(1,3)\times O(n)$, there exists a normalized potential
\[\eta=\lambda^{-1}\left(
                     \begin{array}{cc}
                       0 & \hat{B}_1 \\
                       -\hat{B}^{t}_1I_{1,3} & 0 \\
                     \end{array}
                   \right)\mathrm{d}z \]
                 of $\mathcal{F}$  such that
                 \[\hat{B}_1=\left(\hat{B}_{13},\cdots,\hat{B}_{1m}\right),\ \hat{B}_{1j}=\left(\mathrm{v}_{j },\hat{ \mathrm{v}}_{j }\right)\in Mat(4\times 2,\mathbb{C}),\]
and every $\hat{B}_{1j}$ of $\hat{B}_1$ has one of the following two forms:
\begin{equation}(i)~~~ (\mathrm{v}_{j},\hat{ \mathrm{v}}_{j }) = \left(
                                          \begin{array}{ccccccc}
                                            h_{1j} & \hat{h}_{1j}   \\
                                            h_{1j} & \hat{h}_{1j}  \\
                                            h_{3j} & \hat{h}_{3j} \\
                                            ih_{3j}& i\hat{h}_{3j} \\
                                          \end{array}
                                        \right);\ \ \
 (ii) ~~~ (\mathrm{v}_{j},\hat{ \mathrm{v}}_{j }) = \left(
                                                             \begin{array}{cc}
                                                               h_{1j} & ih_{1j} \\
                                                               h_{2j} & ih_{2j}  \\
                                                               h_{3j}  & ih_{3j}  \\
                                                               h_{4j}  & ih_{4j}  \\
                                                             \end{array}
                                                           \right)
\end{equation}
with all of $\{\mathrm{v}_j,\  \hat{ \mathrm{v}}_{j }\}$ satisfying the following conditions
\begin{equation}\mathrm{v}_j^tI_{1,3}\mathrm{v}_l=\mathrm{v}_j^tI_{1,3}\hat{\mathrm{v}}_l=\hat{\mathrm{v}}_j^tI_{1,3}\hat{\mathrm{v}}_l=0, \ j,l=3,\cdots,m.
\end{equation}
In other words, there are $m-1$ types of normalized potentials with $\hat{B}_{1}$ satisfying $\hat{B}_{1 }^tI_{1,3}\hat{B}_{1 }=0$, namely those being of one of the following $m-1$ forms:

$(1)$ $($all pairs are of type $(i))$

\begin{equation}\label{eq-potential-1}
                         \hat{B}_{1}= \left(
                                          \begin{array}{ccccccc}
                                            h_{13} & \hat{h}_{13} &  h_{14} & \hat{h}_{14} &\cdots &  h_{1m}& \hat{h}_{1m} \\
                                            h_{13} & \hat{h}_{13} &  h_{14}& \hat{h}_{14}&\cdots &   h_{1m}  & \hat{h}_{1m}\\
                                            h_{33} & \hat{h}_{33} &  h_{34}& \hat{h}_{34} &\cdots &  h_{3m} & \hat{h}_{3m} \\
                                            ih_{33}& i\hat{h}_{33} &  ih_{34}& i\hat{h}_{34}&\cdots &  ih_{3m}& i\hat{h}_{3m} \\
                                          \end{array}
                                        \right);\end{equation}

$(2)$ $($the first pair is of type $(ii)$, all others are of type $(i))$
\begin{equation}\label{eq-potential-2}
                           \hat{B}_{1}= \left(
                                          \begin{array}{ccccccc}
                                            h_{13}& i {h}_{13} &  h_{14}& \hat{h}_{14} &\cdots &  h_{1m}&  \hat{h}_{1m}\\
                                            h_{23}& i {h}_{23} &  h_{14}& \hat{h}_{14} &\cdots &  h_{1m}&  \hat{h}_{1m}\\
                                            h_{33}& i {h}_{33} &  h_{34}& \hat{h}_{34} &\cdots &  h_{3m}&  \hat{h}_{3m}\\
                                            h_{44}& i {h}_{43} & ih_{34}&i\hat{h}_{34} &\cdots & ih_{3m}& i\hat{h}_{3m}\\
                                          \end{array}
                                        \right);\end{equation}
Introducing consecutively  more pairs of type $(ii)$, one finally arrives at

       $(m-1)$ $($all pairs are of type $(ii)$$)$
\begin{equation}\label{eq-potential-m}
                           \hat{B}_{1}= \left(
                                          \begin{array}{ccccccc}
                                            h_{13}& i {h}_{13} &  h_{14} & i {h}_{14} &\cdots &  h_{1m} & ih_{1m} \\
                                            h_{23}& i {h}_{23} &  h_{24} & i {h}_{24} &\cdots &  h_{2m} & ih_{2m}\\
                                            h_{33}& i {h}_{33} &  h_{34} & i {h}_{34} &\cdots &  h_{3m} & ih_{3m} \\
                                            h_{43}& i {h}_{43} &  h_{44} & i {h}_{44} &\cdots &  h_{4m} & ih_{4m} \\
                                          \end{array}
                                        \right).\end{equation}

Moreover, if $\mathcal{F}$ is the conformal Gauss map of some Willlmore map from $S^2$ to $S^{2m+2}$, then $\mathfrak{F}$ is of finite uniton type and hence its normalized potential is one of the above forms.
\end{theorem}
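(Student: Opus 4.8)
The plan is to convert the statement into a purely Lie-algebraic classification and then read off the matrix normal forms. By Theorem \ref{thm-BG}(1), a strongly conformally harmonic map $\mathcal{F}$ of finite uniton type admits a normalized potential $\eta=\lambda^{-1}\eta_{-1}\dd z$ whose coefficient $\eta_{-1}$ takes values in the nilpotent subalgebra $\bigoplus_{j\ge 0}\mathfrak{g}^{\xi}_{2j+1}$ attached to some canonical element $\xi$. Since $\mathcal{F}$ is harmonic into the symmetric space, $\eta_{-1}\in\mathfrak{p}^{\C}$, so it has the block shape $\left(\begin{smallmatrix}0&\hat B_1\\-\hat B_1^tI_{1,3}&0\end{smallmatrix}\right)$, and strong conformal harmonicity forces $\hat B_1^tI_{1,3}\hat B_1=0$. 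Thus everything reduces to classifying, up to $\mathrm{Ad}(O^+(1,3)\times O(n))$, the matrices $\hat B_1\in\mathrm{Mat}(4\times n,\C)$ that lie in $\big(\bigoplus_{j\ge0}\mathfrak{g}^{\xi}_{2j+1}\big)\cap\mathfrak{p}^{\C}$ for an admissible canonical $\xi$ and that satisfy the isotropy relation $\hat B_1^tI_{1,3}\hat B_1=0$.

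First I would fix the root-theoretic model. The complexification is $\mathfrak{g}^{\C}=\mathfrak{so}(2m,\C)$ of type $D_m$; I choose the Cartan subalgebra adapted to the splitting $\C^{2m}=\C^4\oplus\C^n$ (tangent $\oplus$ normal), diagonalizing the form $I_{1,3}$ on the $\C^4$-factor by a null basis and the $SO(n)$-action on $\C^n$ by null pairs $f_3,\bar f_3,\dots,f_m,\bar f_m$. In these coordinates $\mathfrak{p}^{\C}$ is exactly the space of the $\hat B_1$'s, its roots are $\pm e_a\pm e_j$ with $a\in\{1,2\}$ and $j\in\{3,\dots,m\}$, and the $\mathrm{ad}\,\xi$-eigenvalue of such a root is $\pm t_a\pm t_j$, where $\xi=(t_1,\dots,t_m)$. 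I would then determine which canonical elements $\xi=\xi_{j_1}+\cdots+\xi_{j_k}$ are compatible with the symmetric-space grading (the parity conditions coming from $\sigma$) and, for each, compute the positive odd eigenspaces. Meanwhile the bilinear condition $\hat B_1^tI_{1,3}\hat B_1=0$ translates into the requirement that the columns of $\hat B_1$ span a totally isotropic subspace of $(\C^4,I_{1,3})$, equivalently into the stated relations $\mathrm{v}_j^tI_{1,3}\mathrm{v}_l=\mathrm{v}_j^tI_{1,3}\hat{\mathrm{v}}_l=\hat{\mathrm{v}}_j^tI_{1,3}\hat{\mathrm{v}}_l=0$.

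The heart of the argument is then to intersect each graded nilpotent piece with this isotropic quadric and put it into normal form. Column-pair by column-pair (indexed by the normal null pairs $f_j,\bar f_j$), the grading leaves exactly two possibilities: either both columns are confined to the common maximal isotropic $2$-plane $W=\mathrm{span}(u_{+e_1},u_{+e_2})$, giving the shape $(i)$, or the $\bar f_j$-column is proportional to the $f_j$-column via $\hat{\mathrm{v}}_j=i\mathrm{v}_j$ with $\mathrm{v}_j$ an arbitrary isotropic vector, giving the shape $(ii)$. Running over the short list of admissible gradings, the number of type-$(ii)$ pairs is the only invariant and ranges over $0,1,\dots,m-2$, which produces precisely the $m-1$ global forms \eqref{eq-potential-1}--\eqref{eq-potential-m}, the surviving cross relations being exactly the off-diagonal isotropy conditions. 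The main obstacle I expect lies in this step: matching the abstract $\mathrm{ad}\,\xi$-eigenspace data to the concrete isotropic matrix normal forms, showing the list is exhaustive and that the $m-1$ types are genuinely inequivalent under $O^+(1,3)\times O(n)$, and verifying that each candidate $\hat B_1$ indeed closes up into an admissible nilpotent potential consistent with a single $\xi$. This is where the tedious but decisive computation of Section 4 resides.

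Finally, the closing assertion is a short corollary of the classification. If $\mathcal{F}$ is the conformal Gauss map of a Willmore map $y\colon S^2\to S^{2m+2}$, then by the discussion of Section 2 it is strongly conformally harmonic, its frame satisfying $B_1^tI_{1,3}B_1=0$. Since the domain is $S^2$, Theorem \ref{thm-BG}(3) guarantees that $\mathcal{F}$ is automatically of finite uniton type. Hence the hypotheses of the classification just established are met, and the normalized potential of $\mathcal{F}$ must be one of the $m-1$ forms listed above.
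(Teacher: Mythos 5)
Your proposal follows essentially the same route as the paper: reduce via Theorem \ref{thm-BG} to classifying the canonical elements $\xi$ compatible with the involution defining $SO^+(1,n+3)/SO^+(1,3)\times SO(n)$, compute the odd graded pieces $\bigoplus_{j\ge0}\mathfrak{g}^{\xi}_{2j+1}$, and then impose $\hat B_1^tI_{1,3}\hat B_1=0$ column-pair by column-pair to arrive at types $(i)$ and $(ii)$ and hence the $m-1$ normal forms. The only point worth flagging is that the grading alone leaves four possible shapes per column pair (not two); it is the isotropy condition together with holomorphicity (to promote the pointwise relations $h_{3j}^2+h_{4j}^2=0$, $h_{1j}^2-h_{2j}^2=0$ to global sign choices) that cuts these down to the two stated types, exactly as in Section 4.2 of the paper.
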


\begin{proof}
By Theorem \ref{thm-BG}, as discussed before, the proof of Theorem \ref{thm-potential} reduces to classifying  nilpotent Lie sub-algebras related to $SO^+(1,n+3)/SO(1,3)\times SO(n)$ and to find out the potentials taking values in these nilpotent Lie sub-algebras which are related to Willmore surfaces \cite{BuGu, DoWa2}. Since the detailed computations are lengthy and technical, we will divide them into several lemmas in Section 3, and leave the concrete proof of this theorem to Section 3.
 \end{proof}
Note that both type $(1)$ and type $(m-1)$ are of finite uniton number $2$ ( Lemma 3.5 of Section 3.2) and are hence $S^1-$invariant by Corollary 5.6 of \cite{BuGu} (see also \cite{Do-Es} for $S^1-$invariant harmonic maps), which also provides a proof of  Corollary 5.10, Corollary 5.11 and part of Theorem 5.3  of \cite{DoWa1}. The remaining cases are in general of finite uniton number $\geq4$ and hence will be more complicated.
 Here we will  use Theorem \ref{thm-potential} to derive some geometric properties of these maps and list some new examples.

\begin{remark}\

  \begin{enumerate}
  \item If $rank(\hat{B}_1)=1$, the potentials of type (1) provide  no Willmore maps, or Willmore maps with a constant light-like vector, which turn out to be corresponding to minimal surfaces in $\mathbb{R}^{n+2}$.  When $rank(\hat{B}_1)=2$ the potentials of the first type will produce no Willmore surfaces at all \cite{Wang-2}.
 \item Potentials of type ($m-1$) are conjectured to correspond to totally isotropic Willmore surfaces. This has been proven in Theorem 3.1 of \cite{Wang-3} when $m=4$, i.e., for Willmore two-spheres in $S^6$. Moreover, excluding the intersection with the first type, every normalized potential of this type produces a unique non S--Willmore surface when $rank(\hat{B}_1)=2$ and gives a pair of dual Willmore surfaces when $rank(\hat{B}_1)=1$.
\item
For any potential of the other types, excluding the intersections with the first type and the last type, one obtains a unique non S--Willmore surface which has a non-isotropic conformal Hopf differential, since in such cases $rank(\hat{B}_1)=2$.
\end{enumerate}
\end{remark}

 \subsection{Willmore two-spheres in $S^6$}
Concerning the case of Willmore two-spheres in $S^6$, we have a geometric description as follows:
\begin{theorem}\label{thm-potential-s6}Let $y:S^2\rightarrow S^6$ be a strong Willmore map. Assume that the normalized potential of (the conformal Gauss map of) $y$ is of the form
\[\eta=\lambda^{-1}\left(
                      \begin{array}{cc}
                        0 & \hat{B}_1 \\
                        -\hat{B}_1^tI_{1,3} & 0 \\
                      \end{array}
                    \right)\mathrm{d}z=\lambda^{-1}\eta_{-1}\mathrm{d}z.\]
                    Then $y$, as well as $\eta$,  belongs to one of the three cases:

$(1)$ $y$ is conformally equivalent to  a complete minimal surface in $\R^6$ with planar ends. In this case,  up to  conjugation of some matrix in $O^+(1,3)\times O(n)$, $\hat{B}_1$ is of the form
   \begin{equation}\label{eq-minimal} \hat{B}_1=
                                        \left(
                                          \begin{array}{ccccc}
                                             \mathrm{v}_1 & h_{20}\mathrm{v}_1 & h_{30}\mathrm{v}_1  &  h_{40}\mathrm{v}_1  \\
                                          \end{array}
                                        \right)~~\hbox{ with }~~\mathrm{v}_1 =
                                        \left(
                                          \begin{array}{c}
                                            \tilde{h}_{13} \\
                                            \tilde{h}_{13} \\
                                            \tilde{h}_{33} \\
                                           i\tilde{h}_{33}  \\
                                          \end{array}
                                        \right)   \  \hbox{ and } \ | \tilde{h}_{33}'|^2 \not\equiv 0
       .                                  \end{equation}

$(2)$  $y$ is not S--Willmore and the Hopf differential of $y$ is not isotropic.
In this case,   up to  conjugation of some matrix in $O^+(1,3)\times O(n)$, $\hat{B}_1$ is of the form
   \begin{equation}\label{eq-nm-ni} \hat{B}_1=
                                        \left(
                                          \begin{array}{ccccc}
                                              h_{10}\mathrm{v}_1 & i h_{10}\mathrm{v}_1 & \hat{h}_{30}\mathrm{v}_2  &  \hat{h}_{40}\mathrm{v}_2  \\
                                          \end{array}
                                        \right)\     \end{equation}
with   \begin{equation}\label{eq-iso-v} \mathrm{v}_1=
                                        \left(
                                          \begin{array}{c}
                                           1+ h_1h_2    \\
                                           -1+ h_1h_2   \\
                                           h_1+h_2  \\
                                           -i(h_1-h_2)   \\
                                          \end{array}
                                        \right)~~\hbox{ and }~~  \mathrm{v}_2 =
                                        \left(
                                          \begin{array}{c}
                                             h_1 \\
                                            h_1 \\
                                             1 \\
                                             i  \\
                                          \end{array}
                                        \right)
                                        \end{equation}
and \begin{equation}\label{eq-non-deg2}(|h_1'|^2+|h_2'|^2) (\hat{h}_{30}^2+ \hat{h}_{40}^2)\not\equiv 0. \end{equation}

$(3)$ $y$ is  totally isotropic in $S^6$, i.e, it comes from the twistor projection of some holomorphic or anti-holomorphic curve  into the twistor bundle $\mathfrak{T}S^6$ of $S^6$. In this case,    up to  conjugation of some matrix in $O^+(1,3)\times O(n)$, $\hat{B}_1$ is of the form
   \begin{equation}\label{eq-isotropic} \hat{B}_1=
                                        \left(
                                          \begin{array}{ccccc}
                                             h_{10}\mathrm{v}_1 & ih_{10}\mathrm{v}_1 & h_{30}\mathrm{v}_1+h_{40}\mathrm{v}_2   &  i(h_{30}\mathrm{v}_1+h_{40}\mathrm{v}_2) \\
                                          \end{array}
                                        \right)\
                                        \end{equation}
                                        with  $\mathrm{v}_1$ and $\mathrm{v}_2$ of the form \eqref{eq-iso-v}.
                                       and \begin{equation}\label{eq-non-deg3}| h_1'|^2(|h_{30}|^2+|h_{40}|^2)\not\equiv 0. \end{equation}

   In all of above cases, $h_{10}$, $h_{20}$, $h_{30}$, $h_{40}$, $\hat{h}_{30}$, $\hat{h}_{40}$, $h_1$ and $h_2$ are assumed to be meromorphic functions on $S^2$ such that $\eta$ has a global meromorphic framing on $S^2$, i.e., there exists some meromorphic framing $F_-:S^2\rightarrow \Lambda^- G^{\C}_{\sigma}$ satisfying $F_-^{-1}\mathrm{d} F_-=\eta .$
   \end{theorem}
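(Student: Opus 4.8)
The plan is to specialize Theorem~\ref{thm-potential} to $S^6$. Since $y:S^2\rightarrow S^6$ is a strong Willmore map, its conformal Gauss map $\mathcal F$ is strongly conformally harmonic, and being defined on $S^2$ it is automatically of finite uniton type by Theorem~\ref{thm-BG}(3); hence Theorem~\ref{thm-potential} applies. Here $n+2=6$ gives $n=4$, so $n+4=2m=8$ and $m=4$, and $\hat B_1\in\mathrm{Mat}(4\times 4,\C)$ splits into the two $4\times 2$ blocks $\hat B_{13}=(\mathrm v_3,\hat{\mathrm v}_3)$ and $\hat B_{14}=(\mathrm v_4,\hat{\mathrm v}_4)$. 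Theorem~\ref{thm-potential} then leaves exactly $m-1=3$ admissible combinations: both blocks of type $(i)$, giving \eqref{eq-potential-1}; the first block of type $(ii)$ and the second of type $(i)$, giving \eqref{eq-potential-2}; and both blocks of type $(ii)$, giving \eqref{eq-potential-m}. My first task is to match these three algebraic families to the geometric cases $(1)$, $(2)$, $(3)$ of the statement.

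Next I would normalize each $\hat B_1$ using the residual conjugation freedom by $O^+(1,3)\times O(4)$. The constraints $\mathrm v_j^tI_{1,3}\mathrm v_l=\mathrm v_j^tI_{1,3}\hat{\mathrm v}_l=\hat{\mathrm v}_j^tI_{1,3}\hat{\mathrm v}_l=0$ assert that the columns of $\hat B_1$ span a subspace of $\C^4$ totally isotropic for $I_{1,3}$, hence of complex dimension at most two. Using the $O^+(1,3)$-freedom I write the spanning isotropic plane in the standard chart \eqref{eq-iso-v}, parametrized by two meromorphic functions $h_1,h_2$; a short computation confirms that the resulting $\mathrm v_1,\mathrm v_2$ are $I_{1,3}$-isotropic and mutually $I_{1,3}$-orthogonal, so that $\hat B_1$ depends only on $h_1,h_2$ and finitely many scalar meromorphic coefficients. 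For the all-$(i)$ family a rank-one $\hat B_1$ is then carried to \eqref{eq-minimal}, the mixed family to \eqref{eq-nm-ni}, and the all-$(ii)$ family to \eqref{eq-isotropic}. The non-degeneracy inequalities in \eqref{eq-minimal}, \eqref{eq-non-deg2} and \eqref{eq-non-deg3} are read off as the conditions that $\mathcal F$ be non-constant, i.e.\ that $y$ be a genuine (branched) Willmore surface rather than a point.

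The decisive step is to translate the normal form of $\eta$ into geometric properties of $y$. For the all-$(ii)$ family I invoke Theorem~3.1 of \cite{Wang-3}, which identifies the type-$(m-1)$ potentials in $S^6$ with totally isotropic Willmore two-spheres, i.e.\ twistor projections of (anti-)holomorphic curves in $\mathfrak{T}S^6$, yielding case $(3)$. For the all-$(i)$ family with $\mathrm{rank}(\hat B_1)=1$ I invoke \cite{Wang-2}: the strongly conformally harmonic map then carries a constant light-like direction, and the resulting Willmore surface is conformally a complete minimal surface in $\R^6$ with planar ends, which is case $(1)$; the rank-two type-$(i)$ alternative is discarded because it produces no Willmore surface. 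Case $(2)$ is the remaining mixed family \eqref{eq-nm-ni}, whose columns span the two independent directions $\mathrm v_1,\mathrm v_2$, so that $\mathrm{rank}(\hat B_1)=2$; by the discussion following Theorem~\ref{thm-potential}, such a potential yields a surface that is not S-Willmore and whose conformal Hopf differential is non-isotropic.

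The main obstacle is precisely this last translation: connecting the nilpotent normal form of $\eta$ to the analytic invariants of $y$, namely the rank of the frame block $B_1$, the isotropy of $\kappa$, and the existence of a constant lift. Establishing these requires running the DPW reconstruction, solving $F_-^{-1}\mathrm dF_-=\eta$ and performing the Iwasawa splitting far enough to extract $\kappa$ and $D_{\bar z}\kappa$ and to compare them against the definitions of S-Willmore, isotropic and totally isotropic surfaces. The long computations underpinning cases $(1)$ and $(3)$ are those carried out in \cite{Wang-2} and \cite{Wang-3}; in the present proof I would quote them and supply only the direct rank count and isotropy bookkeeping that isolates the intermediate case $(2)$, together with the verification that each coefficient configuration admits a global meromorphic framing $F_-:S^2\rightarrow\Lambda^- G^{\C}_\sigma$ with $F_-^{-1}\mathrm dF_-=\eta$, as required for $y$ to be defined on all of $S^2$.
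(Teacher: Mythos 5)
Your overall route coincides with the paper's: specialize Theorem \ref{thm-potential} to $m=4$ to get the three families \eqref{eq-potential-1}--\eqref{eq-potential-m}, parametrize the $I_{1,3}$-isotropic column span by the meromorphic data $h_1,h_2$ as in \eqref{eq-iso-v} (the paper cites (17) of \cite{MWW} for exactly this normal form), and quote Theorem 2.1 of \cite{Wang-2} and Theorem 3.1 of \cite{Wang-3} for the geometric identifications in cases $(1)$ and $(3)$. Two small cautions on the normalization step: the passage to \eqref{eq-iso-v} is a parametrization of isotropic vectors by meromorphic functions rather than a genuine conjugation by $O^+(1,3)$, and the rank-two type-$(i)$ potentials are not ``discarded because they produce no Willmore surface'' as part of this theorem's proof --- that exclusion is itself a result of \cite{Wang-2} and only enters as a remark.

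The genuine gap is in case $(2)$. You correctly identify that the crux is showing the conformal Hopf differential is non-isotropic, but you propose to establish it by running the DPW reconstruction and Iwasawa splitting ``far enough to extract $\kappa$ and $D_{\bar z}\kappa$'', and you never carry this out; as written, case $(2)$ is asserted by appeal to a remark that is itself unproven at that point. The paper avoids the Iwasawa decomposition entirely with a short argument you are missing: if $\langle\kappa,\kappa\rangle\equiv0$, then $\langle D_{\bar z}\kappa,\kappa\rangle\equiv0$ and, by the Willmore equation $D_{\bar z}D_{\bar z}\kappa+\tfrac{\bar s}{2}\kappa=0$, also $\langle D_{\bar z}\kappa,D_{\bar z}\kappa\rangle=-\langle D_{\bar z}D_{\bar z}\kappa,\kappa\rangle\equiv0$; hence isotropy of $\kappa$ is equivalent to $B_1B_1^tI_{1,3}\equiv0$ for the frame block, which by Wu's formula (\cite{Wu}, \cite{DoWa1}) is equivalent to the same identity $\hat B_1\hat B_1^tI_{1,3}\equiv0$ for the \emph{normalized potential}. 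For the normal form \eqref{eq-nm-ni} this identity reduces to $\hat h_{30}^2+\hat h_{40}^2=0$, which is excluded by \eqref{eq-non-deg2}; so $\kappa$ cannot be isotropic. Without Wu's formula (or the explicit Iwasawa computation you defer), the non-isotropy claim in case $(2)$ --- and hence the trichotomy itself --- is not established.
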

\begin{proof} Setting $m=4$ in Theorem \ref{thm-potential}, we see that there are three kinds of possible nilpotent normalized potentials from \eqref{eq-potential-1}, \eqref{eq-potential-2} and \eqref{eq-potential-m}:
\begin{equation}\label{eq-potent6-1}
                         \hat{B}_{1}= \left(
                                          \begin{array}{ccccccc}
                                            h_{13} &  \hat{h}_{13} &   h_{14} &  \hat{h}_{14} \\
                                            h_{13} &  \hat{h}_{13} &   h_{14} &  \hat{h}_{14} \\
                                            h_{33} &  \hat{h}_{33} &   h_{34} &  \hat{h}_{34} \\
                                           ih_{33} & i\hat{h}_{33} &  ih_{34} & i\hat{h}_{34} \\
                                          \end{array}
                                        \right),\end{equation}
                         \begin{equation} \label{eq-potent6-2}
                           \hat{B}_{1}= \left(
                                          \begin{array}{ccccccc}
                                            h_{13} & i {h}_{13} &  h_{14} &  \hat{h}_{14}   \\
                                            h_{23} & i {h}_{23} &  h_{14} &  \hat{h}_{14} \\
                                            h_{33} & i {h}_{33} &  h_{34} &  \hat{h}_{34}  \\
                                            h_{43} & i {h}_{43} & ih_{34} & i\hat{h}_{34}  \\
                                          \end{array}
                                        \right),\end{equation}
and
\begin{equation} \label{eq-potent6-3}
                           \hat{B}_{1}= \left(
                                          \begin{array}{ccccccc}
                                            h_{13} & i {h}_{13} &  h_{14} & i {h}_{14} \\
                                            h_{23} & i {h}_{23} &  h_{24} & i {h}_{24} \\
                                            h_{33} & i {h}_{33} &  h_{34} & i {h}_{34}  \\
                                            h_{43} & i {h}_{43} &  h_{44} & i {h}_{44}  \\
                                          \end{array}
                                        \right).\end{equation}
We see that \eqref{eq-minimal} comes from \eqref{eq-potent6-1}.

We need to show that \eqref{eq-nm-ni} comes from \eqref{eq-potent6-2} and \eqref{eq-isotropic} comes from \eqref{eq-potent6-3}.
 Assume that
$\hat{B}_1=
                                        \left(
                                          \begin{array}{ccccc}
                                             \check{\mathrm{v}}_1 &  \check{\mathrm{v}}_2 &   \check{\mathrm{v}}_3  &    \check{\mathrm{v}}_4  \\
                                          \end{array}
                                        \right)$. Notice that  \eqref{eq-Willmore harmonic} requires
                                        \[\check{\mathrm{v}}_j^t I_{1,3}\check{\mathrm{v}}_k=0,\ \hbox{ for all }\ j,k=1,\cdots,4.
                                        \]
                                        Especially,  $\check{\mathrm{v}}_j^t I_{1,3}\check{\mathrm{v}}_j=0$. So by (17) of \cite{MWW} we can assume that  $ \check{\mathrm{v}}_1=h_{10} \mathrm{v}_1$ and $\check{\mathrm{v}}_2=ih_{10} \mathrm{v}_1$, with
                                         \[
                                       \mathrm{v}_1=
                                        \left(
                                          \begin{array}{c}
                                           1+ h_{1}h_{2}    \\
                                           -1+{h}_{1}{h}_{2}   \\
                                          h_{1}+{h}_{2}  \\
                                          -i(h_{1}-{h}_{2})   \\
                                          \end{array}
                                        \right) \hbox{ and } \mathrm{v}_2 =
                                        \left(
                                          \begin{array}{c}
                                             h_{1} \\
                                            h_{1} \\
                                            1 \\
                                             i  \\
                                          \end{array}
                                        \right).\]
Then the conditions $\check{\mathrm{v}}_j^tI_{1,3}\check{\mathrm{v}}_1=\check{\mathrm{v}}_j^tI_{1,3}\check{\mathrm{v}}_j=0$ and  $\check{\mathrm{v}}_j\in \mathbb{R}^{1,3}\otimes \mathbb{C}$, $j=3,4$, indicates that
\[\check{\mathrm{v}}_j=\check{h}_{j0}\mathrm{v}_1+\tilde{h}_{j0}\mathrm{v}_2, \ j=3,4,\]
 for some functions $\check{h}_{j0}, \tilde{h}_{j0}$. Together with the restrictions of \eqref{eq-potent6-2} and \eqref{eq-potent6-3}, \eqref{eq-nm-ni} and \eqref{eq-isotropic} follow straightforwardly.

 As to the geometric descriptions, Case (1) follows from Theorem 2.1  of \cite{Wang-2} and Case (3) follows from Theorem 3.1 of \cite{Wang-3}.
For Case (2), $y$ is not S--Willmore since $rank(\hat{B}_1)=2$. Assume $\langle\kappa,\kappa\rangle\equiv0$, then $\langle D_{\bar{z}}\kappa,\kappa\rangle\equiv0$,  and
$\langle D_{\bar{z}}\kappa,D_{\bar{z}}\kappa\rangle=-\langle D_{\bar{z}}D_{\bar{z}}\kappa,\kappa\rangle\equiv0$,
since $\kappa$ satisfies the Willmore equation $D_{\bar{z}}D_{\bar{z}}\kappa+\frac{\bar{s}}{2}\kappa=0$. Therefore
$\kappa$ being isotropic is equivalent to $B_1B_1^tI_{1,3}\equiv0$, which is equivalent to $\hat{B}_1\hat{B}^tI_{1,3}\equiv0$ by Wu's formula \cite{Wu}, \cite{DoWa1}. On the other hand  $\hat{B}_1\hat{B}^tI_{1,3}\equiv0$ is equivalent to $h_{30}^2+ h_{40}^2=0$, which is not allowed due to \eqref{eq-non-deg2}. Therefore $\kappa$ is not isotropic.
\end{proof}

\begin{remark}It is usually not easy to check whether $y$  is an immersion or not. Fortunately we do have an example of type (3) which is a non--S--Willmore Willmore immersion. See the next subsection for details.

 \end{remark}

\begin{corollary}   Let $y:S^2\rightarrow S^4$ be a strong Willmore map such that, up to  conjugation of some matrix in $O(1,3)\times  O(2)$, the  normalized potential of its the conformal Gauss map is of the form
\[\eta=\lambda^{-1}\left(
                      \begin{array}{cc}
                        0 & \hat{B}_1 \\
                        -\hat{B}_1^tI_{1,3} & 0 \\
                      \end{array}
                    \right)\mathrm{d}z=\lambda^{-1}\eta_{-1}\mathrm{d}z,\]
                    Then $y$, as well as $\eta$,  belongs to one of the cases:
\begin{enumerate}
\item $y$ is conformally equivalent to  a complete minimal surface in $\R^4$ with planar ends. In this case, $r(\xi)=2$ and
   \begin{equation} \hat{B}_1=
                                        \left(
                                          \begin{array}{ccccc}
                                             h_{10}\mathrm{v}_1 & h_{20}\mathrm{v}_1  \\
                                          \end{array}
                                        \right),\end{equation}
                                          with
                                        \begin{equation}\mathrm{v}_1^t=\left(
                                                                               \begin{array}{cccc}
                                                                                 h_1&
                                                                                 h_1 &                                                                                 ih_{2} &
                                                                                 ih_{2} \\
                                                                               \end{array}
                                                                             \right)
\hbox{ and } |h_1|^2+| h_2|^2  \not\equiv 0 .    \end{equation}
 \item $y$  is  isotropic in $S^4$, therefore it comes from the twistor projection of a holomorphic or anti-holomorphic curve into the twistor bundle $\C P^3$ of $S^4$. In this case, $r(\xi)=2$ and
   \begin{equation} \hat{B}_1=
                                        \left(
                                          \begin{array}{ccccc}
                                             h_{10}\mathrm{v}_1 & ih_{10}\mathrm{v}_1  \\
                                          \end{array}
                                        \right)\
                                        \end{equation}
                                        with  $\mathrm{v}_1$  being of the form \eqref{eq-iso-v}.
\end{enumerate}
   In all of the above cases, $h_{10}$, $h_{20}$, $h_1$ and $h_{2}$ are meromorphic functions on $S^2$ such that the solution to $F_-^{-1}\mathrm{d}F_-=\eta$, $F_-(0,\lambda)=I_6$ is meromorphic on $S^2$.

   In particular, both kinds of Willmore maps as above are S--Willmore.
\end{corollary}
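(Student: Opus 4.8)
The plan is to run the argument of the proof of Theorem~\ref{thm-potential-s6}, but with $m=3$ in Theorem~\ref{thm-potential}, so that $\mathcal{F}$ takes values in $SO^+(1,5)/SO^+(1,3)\times SO(2)$ and $\hat B_1=\hat B_{13}$ is a single $4\times 2$ block $(\mathrm{v}_3,\hat{\mathrm{v}}_3)$. When $m=3$ only the two extreme potentials \eqref{eq-potential-1} and \eqref{eq-potential-m} survive---the single block is either of type $(i)$ or of type $(ii)$---and both of these are of finite uniton number $2$ by the note following Theorem~\ref{thm-potential}, which immediately gives $r(\xi)=2$ in either case. So I would organize the proof into three steps: (a) reduce the two algebraic normal forms using the isotropy condition; (b) import the geometric identifications from \cite{Wang-2,Wang-3}; (c) read off S--Willmoreness from the rank of $\hat B_1$.

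First I would treat the type $(ii)$ block. Writing it as $(\mathrm{v}_3,i\mathrm{v}_3)$, the isotropy condition $\hat B_1^tI_{1,3}\hat B_1=0$ of Theorem~\ref{thm-potential} collapses to the single scalar equation $\mathrm{v}_3^tI_{1,3}\mathrm{v}_3=0$, so that $\mathrm{v}_3$ is a null vector in $\R^{1,3}\otimes\C$. Exactly as in the proof of Theorem~\ref{thm-potential-s6}, the light-cone parametrization (equation~(17) of \cite{MWW}) then lets me write, after a conjugation in $O(1,3)\times O(2)$, $\mathrm{v}_3=h_{10}\mathrm{v}_1$ with $\mathrm{v}_1$ of the form \eqref{eq-iso-v}; this is precisely the shape claimed in case~$(2)$ of the corollary. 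The block has rank one, and its totally isotropic (twistor) description follows from case~$(3)$ of Theorem~\ref{thm-potential-s6} (Theorem~3.1 of \cite{Wang-3}) restricted to $S^4$, where the relevant twistor space is $\C P^3$.

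Next I would treat the type $(i)$ block. A one-line computation shows that every column $(h,h,k,ik)^t$ is $I_{1,3}$--null and that two such columns are $I_{1,3}$--orthogonal, so $\hat B_1^tI_{1,3}\hat B_1=0$ holds with no extra constraint. Since $y$ is assumed to be a genuine strong Willmore map, the rank--two subcase is excluded: by the first Remark after Theorem~\ref{thm-potential} (see \cite{Wang-2}) a rank--two type~$(1)$ potential produces no Willmore surface at all. Hence $\hat B_1$ has rank one, and after the $O(1,3)\times O(2)$ normalization it takes the displayed form of case~$(1)$; Theorem~2.1 of \cite{Wang-2} then identifies $y$ with a complete minimal surface in $\R^4$ with planar ends, exactly as in case~$(1)$ of Theorem~\ref{thm-potential-s6}. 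Finally, in both cases $\hat B_1$ has rank one, whence the frame datum $B_1$ has rank one on an open dense set; since $y$ is S--Willmore iff $\mathrm{rank}(B_1)=1$ there (the criterion recalled in Section~2), and since both a minimal surface in $\R^4$ and a twistor--holomorphic isotropic surface in $S^4$ are manifestly S--Willmore, this proves the last assertion and recovers the classical fact that every Willmore two-sphere in $S^4$ admits a dual surface.

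The algebra here is easy: the isotropy system all but disappears when $m=3$, and the only genuine case distinction is rank one versus rank two of a single null block. The hard part, as in Theorem~\ref{thm-potential-s6}, is the geometric dictionary---turning the rank--one type~$(i)$ potential into a minimal surface with planar ends and the type~$(ii)$ potential into a twistor--holomorphic curve. Both rest on the explicit Iwasawa decompositions carried out in \cite{Wang-2,Wang-3}, so the real work is importing those computations correctly and checking that the normalizing conjugation by $O(1,3)\times O(2)$ sends $\mathrm{v}_1$ to the stated forms while preserving the null structure of the columns, so that the passage from $\mathrm{rank}(\hat B_1)$ to $\mathrm{rank}(B_1)$ (via Wu's formula \cite{Wu,DoWa1}) is legitimate.
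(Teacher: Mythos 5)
Your proposal is correct and follows essentially the same route as the paper: the paper's own proof simply restricts Theorem \ref{thm-potential-s6} to $S^4$ (noting that its intermediate case (2) cannot occur there) and reads off S--Willmoreness from $\mathrm{rank}\,\hat B_1=1$, while you reach the same two normal forms by setting $m=3$ in Theorem \ref{thm-potential} and importing the same facts from \cite{Wang-2} and \cite{Wang-3}. The only discrepancy is notational: your type $(i)$ computation yields $\mathrm{v}_1=(h_1,h_1,h_2,ih_2)^t$, which is the $I_{1,3}$--null form required by Theorem \ref{thm-potential}, whereas the $(h_1,h_1,ih_2,ih_2)^t$ printed in the corollary is not null and appears to be a typo.
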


\begin{proof} Restricting to $S^4$, it is easy to see that Case (2) of Theorem \ref{thm-potential-s6} can not happen. The isotropic case is a corollary of Theorem \ref{thm-potential-s6}. The last statement comes from the simple observation that in both cases, $rank\hat{B}_1=1$.
\end{proof}

 The maps of type (3) in Theorem \ref{thm-potential-s6} are full in some even dimensional spheres. And the maps of type (2) sometimes can reduce to maps into some $S^5\subset S^6$.
 Therefore we obtain
 \begin{corollary} Let $y:S^2\rightarrow S^5\subset S^6$ be a Willmore map, which is not S--Willmore. Then, up to  conjugation of some matrix in $O(1,3)\times  O(3)$, the normalized potential of $y$ is of the form
\[\eta=\lambda^{-1}\left(
                      \begin{array}{cc}
                        0 & \hat{B}_1 \\
                        -\hat{B}_1^tI_{1,3} & 0 \\
                      \end{array}
                    \right)\mathrm{d}z, \]with\begin{equation}\label{eq-s5}
\hat{B}_{1}= \left(
                                          \begin{array}{ccccccc}
                                              h_0( 1+ h_{1}h_{2} ) & i h_0( 1+ h_{1}h_{2} )     &  \hat{h}_0h_{1} & 0 \\
                                             h_0(-1+{h}_{1}{h}_{2} ) & ih_0(-1+{h}_{1}{h}_{2} ) & \hat{h}_0h_{1}&0\\
                                              h_{0}(h_{1}+{h}_{2})    & ih_{0}(h_{1}+{h}_{2})   &  \hat{h}_0 &0 \\
                                              -i h_{0}(h_{1}-{h}_{2})    & h_{0}(h_{1}-{h}_{2}) & i\hat{h}_0 &0\\
                                          \end{array}
                                        \right)
\end{equation}
with $h_j$, $j=0,1,2$, and $\hat{h}_0$ being non-constant meromorphic functions on $S^2$. Note that $\hat{B}_1$ is of type (2) in Theorem \ref{thm-potential-s6} (excluding type (1) and type (3) in Theorem \ref{thm-potential-s6}).
\end{corollary}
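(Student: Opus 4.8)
The plan is to deduce the statement directly from Theorem \ref{thm-potential-s6}, by feeding in the two hypotheses---that $y$ is not S--Willmore and that its image lies in a totally geodesic $S^5\subset S^6$---and tracking how each one cuts down the list of admissible potentials. First I would use the criterion recalled in Section~2 that a Willmore surface is S--Willmore exactly when $rank(\hat B_1)=1$ on an open dense set. Thus ``not S--Willmore'' forces $rank(\hat B_1)=2$, which by inspection excludes the minimal type \eqref{eq-minimal} (all four columns are multiples of a single $\mathrm{v}_1$, so rank $1$) and leaves only the type~(2) potential \eqref{eq-nm-ni} and the totally isotropic type~(3) potential \eqref{eq-isotropic} with $h_{40}\not\equiv0$.

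Next I would convert the geometric condition $y(S^2)\subset S^5$ into an algebraic condition on $\hat B_1$. Choosing the constant spacelike unit vector $e\in\R^{1,7}$ orthogonal to the fixed $\R^{1,6}$ containing $S^5$, the canonical lift $Y$ and hence $Y_z,Y_{\bar z},Y_{z\bar z},N$ all lie in $e^\perp$, so the conformal Gauss $4$-plane $Gr$ is everywhere orthogonal to $e$. Therefore $Gr$ already takes values in the totally geodesic $Gr_{1,3}(e^\perp)=SO^+(1,6)/SO^+(1,3)\times SO(3)$ inside $SO^+(1,7)/SO^+(1,3)\times SO(4)$. Since the normalized potential is canonically attached to the harmonic map, and since the DPW data of a map into a totally geodesic inner symmetric subspace lie in the corresponding sub-loop-group, $\eta$ takes values in the sub-loop-algebra $\Lambda\mathfrak{so}(1,6)^\C_\sigma$. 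Writing $e$ in the normal frame as a constant real vector $c\in\R^4$, this says precisely that $\hat B_1 c\equiv0$.

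With this in hand the rest is an elementary kernel computation, using that $\mathrm{v}_1,\mathrm{v}_2$ of \eqref{eq-iso-v} are linearly independent for non-degenerate data. For \eqref{eq-isotropic} one gets $(c_3+ic_4)h_{40}\equiv0$ and $(c_1+ic_2)h_{10}+(c_3+ic_4)h_{30}\equiv0$; as $h_{40}\not\equiv0$ these force $c=0$, contradicting $e\neq0$, so type~(3) is impossible and $y$ must be of type~(2). For \eqref{eq-nm-ni} the same expansion gives $(c_1+ic_2)h_{10}\equiv0$ and $c_3\hat h_{30}+c_4\hat h_{40}\equiv0$; since $h_{10}\not\equiv0$ we obtain $c_1=c_2=0$, and the second relation with constant $(c_3,c_4)$ forces $(\hat h_{30},\hat h_{40})$ to point in a fixed direction, i.e. the ratio $\hat h_{30}:\hat h_{40}$ is constant. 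A constant $O(2)$-rotation of the two $\mathrm{v}_2$-carrying normal directions $\psi_3,\psi_4$---part of the $O(1,3)\times O(4)$ normal gauge of Theorem~\ref{thm-potential-s6}---then aligns $e$ with $\psi_4$: the fourth column is annihilated and columns three and four collapse to a single multiple $\hat h_0\mathrm{v}_2$, leaving the residual $S^5$-symmetry $O(1,3)\times O(3)$. Setting $h_0:=h_{10}$ this is exactly \eqref{eq-s5}, and the non-constancy of $h_0,h_1,h_2,\hat h_0$ is read off from the non-degeneracy \eqref{eq-non-deg2} together with the requirement that $y$ be full in $S^5$ and not contained in some $S^4$.

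The hard part will be the middle step: rigorously justifying that the global inclusion $y(S^2)\subset S^5$ forces the \emph{normalized potential} (not merely the geometric frame $B_1$) to annihilate the constant direction $e$. This is the equivariance of the DPW and Burstall--Guest construction under the totally geodesic embedding $SO^+(1,6)/SO^+(1,3)\times SO(3)\hookrightarrow SO^+(1,7)/SO^+(1,3)\times SO(4)$: one must check that reducing the target of the harmonic map reduces the extended frame, and hence its Birkhoff minus-factor $F_-$ and the potential $\eta=F_-^{-1}\dd F_-$, to the corresponding subgroup. Once this reduction is established, the kernel computation above is routine and pins down \eqref{eq-s5} uniquely.
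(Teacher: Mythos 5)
Your argument is correct, and it actually supplies a proof where the paper gives essentially none: the corollary is stated as an immediate consequence of Theorem \ref{thm-potential-s6} preceded only by the one-line remark that type (3) maps are full in even-dimensional spheres while type (2) maps ``sometimes reduce'' to $S^5$. The one substantive difference is how type (3) is excluded: the paper appeals to the geometric fact that totally isotropic (twistor-projected) two-spheres are full in an even-dimensional sphere, whereas you rule it out by the explicit kernel computation $\hat{B}_1c=0$ on \eqref{eq-isotropic}, which forces $c=0$ once $h_{10}h_{40}\not\equiv 0$; your route has the advantage of staying entirely at the level of the potential and of simultaneously producing the normal form \eqref{eq-s5} for type (2) (constancy of the ratio $\hat{h}_{30}:\hat{h}_{40}$ followed by the residual $O(2)$ rotation), rather than invoking two separate geometric facts. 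The step you flag as the crux --- that $y(S^2)\subset S^5$ forces the \emph{normalized potential}, not just some geometric frame, to annihilate a constant real direction --- is indeed the only point requiring care, and your sketch is the standard and correct one: with $\psi_4=e$ constant the extended frame takes values in $\Lambda SO^+(1,6)_\sigma$, its Birkhoff factorization inside the sub-loop-group is also one in the ambient loop group, and uniqueness of the normalized Birkhoff factor then puts $\eta$ in $\lambda^{-1}\mathfrak{so}(1,6,\C)$; since the normalization of Theorem \ref{thm-potential-s6} only conjugates by a real matrix in $O^+(1,3)\times O(4)$, the annihilated vector $c$ stays real and constant, which is what your case analysis uses. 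Two small points worth tightening: $h_{10}\not\equiv 0$ in \eqref{eq-nm-ni} should be justified from the non-S--Willmore hypothesis (otherwise $\mathrm{rank}\,\hat{B}_1=1$) rather than from \eqref{eq-non-deg2}, which does not mention $h_{10}$; and the asserted non-constancy of all four functions in \eqref{eq-s5} really does need the fullness in $S^5$ you invoke --- it does not follow from \eqref{eq-non-deg2} alone.
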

If one chooses the $h_j$ as polynomials in $z$, then one will obtain a possibly branched Willmore two-sphere in $S^5$ which is not S--Willmore.

 In \cite{MWW2}, a Willmore two-sphere in $S^5$ which is not S--Willmore is provided. This example is given by some adjoint transforming of a minimal surface in $\R^5$ with isotropic Hopf differential and special ends \cite{MWW2}. Moreover, applying the classification theorem in \cite{MWW2}, one obtains
  \begin{corollary} The Willmore 2-spheres in $S^5$ obtained in the corollary above can be derived by some adjoint transform of a minimal surface in $\R^5$ with isotropic Hopf differential and special ends.
\end{corollary}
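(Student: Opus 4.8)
The plan is to combine the explicit description of the present family with the independent classification of non--S--Willmore Willmore two-spheres in $S^5$ obtained in \cite{MWW2}. By the preceding corollary, the potential \eqref{eq-s5} exhausts, up to conjugation by a matrix in $O(1,3)\times O(3)$, the normalized potentials of \emph{all} Willmore two-spheres $y:S^2\to S^5\subset S^6$ that are not S--Willmore. On the other hand, the classification theorem of \cite{MWW2} asserts that every non--S--Willmore Willmore two-sphere in $S^5$ is realized as an adjoint transform of a minimal surface in $\R^5$ whose Hopf differential is isotropic and whose ends are of the prescribed special type. Since both statements describe the same class of surfaces, matching them yields the corollary directly.

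First I would recall why the surfaces produced from \eqref{eq-s5} lie precisely in the class to which \cite{MWW2} applies. They are (possibly branched) Willmore by the DPW correspondence of Theorem \ref{thm-BG} together with the condition $\hat{B}_1^t I_{1,3}\hat{B}_1=0$ defining a strongly conformally harmonic map; they are non--S--Willmore because $rank(\hat{B}_1)=2$, the S--Willmore locus being characterized by $rank(B_1)=1$; and the vanishing fourth column of $\hat{B}_1$ keeps one normal direction of the conformal Gauss map constant, which is exactly the reduction of $y$ into a totally geodesic $S^5\subset S^6$. Thus \eqref{eq-s5} parameterizes genuine non--S--Willmore Willmore two-spheres in $S^5$, and the classification of \cite{MWW2} applies to each of them, supplying the base minimal surface in $\R^5$ together with the adjoint transform recovering $y$.

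The main obstacle is the compatibility of the two parameterizations. One must check that the meromorphic data $h_0,\hat{h}_0,h_1,h_2$ of \eqref{eq-s5} always correspond, through the adjoint-transform construction of \cite{Ma2006,Ma}, to admissible Weierstrass-type data for a minimal surface in $\R^5$ --- in particular that the isotropy of that minimal surface's Hopf differential and the prescribed end behaviour are forced by the normal form rather than imposed as additional hypotheses --- and, conversely, that the construction of \cite{MWW2} produces no non--S--Willmore Willmore two-sphere in $S^5$ whose potential escapes the family \eqref{eq-s5}. This two-sided bookkeeping of the respective normal-form data, rather than any fresh global analysis, is where the real effort lies; once the two classifications are seen to cover exactly the same surfaces, the corollary follows at once.
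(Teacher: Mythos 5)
Your argument is essentially the paper's: the proof there consists of the single observation that the surfaces coming from the potential \eqref{eq-s5} are non--S--Willmore Willmore two-spheres in $S^5$, so the classification theorem of \cite{MWW2} applies and exhibits each of them as an adjoint transform of a minimal surface in $\R^5$ with isotropic Hopf differential and special ends, exactly as you argue. The only remark worth making is that the ``two-sided bookkeeping'' you flag as the main obstacle is not actually required, since the corollary is a one-directional statement: one only needs that every surface in the family falls under the hypotheses of the classification in \cite{MWW2}, not that the construction of \cite{MWW2} produces nothing outside the family \eqref{eq-s5}.
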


Note that although we aim to classify Willmore two--spheres, the above procedures also can be used to tori or other Riemann surfaces to obtain Willmore maps with non-trivial topology.
 \begin{example} Examples of non S--Willmore branched Willmore tori in $S^6$ of finite uniton type.
 \begin{enumerate}
 \item  Let $\eta$ be a normalized potential with $\hat{B}_1$ of the form (type (3) in Theorem \ref{thm-potential-s6})
 \[\hat{B}_{1}= \left(
                                          \begin{array}{ccccccc}
                                            h_{1}h_0 & ih_{1}h_0  &  h_{2} & ih_2 \\
                                             -h_{1}h_0 & -ih_{1}h_0  &  h_{2}& ih_2\\
                                            h_{1}& i{h}_{1} &  h_{2}h_0 & ih_2h_0 \\
                                            -ih_{1}& {h}_{1} & i h_{2}h_0 & -h_2h_0\\
                                          \end{array}
                                        \right)\]
with $h_j$, $j=0,1,2$, non-constant meromorphic functions on $T^2$. Moreover, if the following six integrals
  \begin{equation}\label{eq-t2}
    \begin{split}
&  \mathbf{h}_{1}=\int h_1\mathrm{d}z,\  \mathbf{h}_{2}=\int h_2\mathrm{d}z,\ \mathbf{h}_{10}=\int h_1h_0 \mathrm{d}z,  \ \mathbf{h}_{20}=\int h_2h_0 \mathrm{d}z,\\
 &  \mathbf{h}_{31}=-\int\mathbf{h}_{10}h_2\mathrm{d}z+\int\mathbf{h}_{1}h_2h_0\mathrm{d}z,  \mathbf{h}_{32}= -\int  \mathbf{h}_{2}h_1h_0\mathrm{d}z+\int  \mathbf{h}_{20}h_1\mathrm{d}z
    \end{split}
\end{equation}
are also meromorphic functions on $T^2$, one will obtain a totally isotropic strong Willmore map with trivial monodromy which is full in $S^6$.

 \item It is not difficult to satisfy the conditions above. For example, let $\hat{h}_1,\hat{h}_2$ be meromorphic functions on $T^2$.  Then $h_1=h_2=\hat{h}_1'$, $h_0=\frac{\hat{h}_2'}{\hat{h}_1'}$ will provide a totally isotropic strong Willmore map from $T^2$ to $S^6$.
  However, to determine when such a Willmore map will be an immersion will be a highly non-trivial and interesting problem, see for example \cite{Bryant1982, Bryant1984, Bryant1988, Kusner1989}. Note that solving similar problems is also an interesting topic in the minimal surface theory.
\end{enumerate}
\end{example}

\subsection{A concrete example}

To derive concrete examples, one needs to work out the Iwasawa decompositions in a concrete fashion. We will leave it to \cite{Wang-3}. Here we will just show one example of Willmore 2-sphere which is fully immersed in to $S^6$. This is the first known example of Willmore two-sphere admitting no dual surface.

\begin{theorem}\label{thm-example}\cite{DoWa1}, \cite{Wang-3} Let \begin{equation}\label{eq-example-np}\eta=\lambda^{-1}\left(
                      \begin{array}{cc}
                        0 & \hat{B}_1 \\
                        -\hat{B}_1^tI_{1,3} & 0 \\
                      \end{array}
                    \right)\mathrm{d}z,\ \hbox{ with } \ \hat{B}_1=\frac{1}{2}\left(
                     \begin{array}{cccc}
                       2iz&  -2z & -i & 1 \\
                       -2iz&  2z & -i & 1 \\
                       -2 & -2i & -z & -iz  \\
                       2i & -2 & -iz & z  \\
                     \end{array}
                   \right).\end{equation}
Then $\hat{B}_1$ is of type (3) in Theorem \ref{thm-potential-s6} and the associated family of unbranched Willmore two-spheres $x_{\lambda}$, $\lambda\in S^1$, corresponding to $\eta$, is \begin{equation}\label{example1}
 x_{\lambda} =\frac{1}{ \left(1+r^2+\frac{5r^4}{4}+\frac{4r^6}{9}+\frac{r^8}{36}\right)}
\left(
                          \begin{array}{c}
                            \left(1-r^2-\frac{3r^4}{4}+\frac{4r^6}{9}-\frac{r^8}{36}\right) \\
                            -i\left(z- \bar{z})(1+\frac{r^6}{9})\right) \\
                            \left(z+\bar{z})(1+\frac{r^6}{9})\right) \\
                            -i\left((\lambda^{-1}z^2-\lambda \bar{z}^2)(1-\frac{r^4}{12})\right) \\
                            \left((\lambda^{-1}z^2+\lambda \bar{z}^2)(1-\frac{r^4}{12})\right) \\
                            -i\frac{r^2}{2}(\lambda^{-1}z-\lambda \bar{z})(1+\frac{4r^2}{3}) \\
                            \frac{r^2}{2} (\lambda^{-1}z+\lambda \bar{z})(1+\frac{4r^2}{3})  \\
                          \end{array}
                        \right)
\end{equation}
with $r=|z| .$ Note that for every  $\lambda\in S^1$, $x_{\lambda}$ is isometric to the other ones by some rotation in $SO(7)$.  Moreover $x_{\lambda}:S^2\rightarrow S^6$ is a Willmore immersion in $S^6$, which is full, not S--Willmore, and totally isotropic.
\end{theorem}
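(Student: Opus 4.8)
The plan is to run the DPW recipe of Theorem~\ref{thm-BG}(2) on the given $\eta$, splitting the verification into an algebraic part, which pins down the type and hence forces both isotropy and the failure of the S--Willmore condition, and an analytic part, where the immersion and fullness statements are read off from the closed form \eqref{example1}.

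First I would identify $\hat B_1$ with one of the normal forms of Theorem~\ref{thm-potential-s6}. Writing the columns as $\check{\mathrm v}_1,\dots,\check{\mathrm v}_4$, one sees $\check{\mathrm v}_2=i\check{\mathrm v}_1$ and $\check{\mathrm v}_4=i\check{\mathrm v}_3$, so $\hat B_1$ has the column structure of \eqref{eq-potent6-3}; a direct check gives $\check{\mathrm v}_j^tI_{1,3}\check{\mathrm v}_k=0$ for all $j,k$, so \eqref{eq-Willmore harmonic} holds. Matching against the template \eqref{eq-isotropic}--\eqref{eq-iso-v} forces $h_1=i/z$, $h_2=0$, $h_{10}=iz$, $h_{30}=0$, $h_{40}=-z/2$, for which the nondegeneracy condition \eqref{eq-non-deg3} is satisfied; hence $\eta$ is genuinely of type $(3)$. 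Case $(3)$ of Theorem~\ref{thm-potential-s6} (equivalently Theorem~3.1 of \cite{Wang-3}) then gives that the associated surface is totally isotropic, i.e. the twistor projection of a holomorphic or anti--holomorphic curve in $\mathfrak T S^6$. Moreover $\mathrm{rank}\,\hat B_1=2$, since $\mathrm v_1$ and $\mathrm v_2$ are independent where $h_1\neq0$; by the criterion recalled in Section~2 ($y$ is S--Willmore iff $\mathrm{rank}\,B_1=1$ on an open dense set), $y$ is not S--Willmore.

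Second, to obtain $x_\lambda$ itself I would solve $F_-^{-1}\dd F_-=\eta$ with $F_-(0,\lambda)=e$. Since $\eta_{-1}$ is nilpotent--valued with entries polynomial of degree $\le 1$ in $z$, the solution $F_-(z,\lambda)$ is produced by finitely many iterated integrations (Remark after Theorem~\ref{thm-BG}) and is polynomial in $z$ and $\lambda^{-1}$. One then performs the Iwasawa splitting $F_-=\tilde F\tilde F_+^{-1}$, with $\tilde F(z,\bar z,\lambda)\in\Lambda G_\sigma$ (here $G=SO^+(1,7)$) and $\tilde F_+\in\Lambda^+_\FC G^{\C}_\sigma$, to recover the extended frame; the canonical lift is read from the first two rows of $\tilde F$ via $\tfrac1{\sqrt2}(Y+N),\tfrac1{\sqrt2}(-Y+N)$, and $x_\lambda=\pi_0(\tilde F)$ yields \eqref{example1}. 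Carrying out this Iwasawa decomposition in closed form over the whole loop is the one genuinely hard step, and it is exactly the computation deferred to \cite{Wang-3}: the solvable--subgroup factorization is not algebraic in general, so the passage from $F_-$ to \eqref{example1} is where all the effort lies.

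Finally, with \eqref{example1} available the remaining claims are direct. For the immersion/unbranched statement I would compute $e^{2\omega}=2\langle (x_\lambda)_z,(x_\lambda)_{\bar z}\rangle$ and verify it is finite and nonzero on $\C$, then pass to $w=1/z$ to check that $x_\lambda$ extends smoothly and stays immersed at $z=\infty$; the denominator $1+r^2+\tfrac{5r^4}{4}+\tfrac{4r^6}{9}+\tfrac{r^8}{36}$ is everywhere positive, which already guarantees $x_\lambda$ is well defined on all of $S^2$. Fullness holds because the seven components of \eqref{example1} are linearly independent over $\R$ as functions of $(z,\bar z)$ (their leading terms $1$, $z\pm\bar z$, $\lambda^{-1}z^2\pm\lambda\bar z^2$, $r^2(\lambda^{-1}z\pm\lambda\bar z)$ are distinct), so the image lies in no proper linear subspace of $\R^7$ and $x_\lambda$ is full in $S^6$. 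The $SO(7)$--isometry between the members of the associated family is the standard $S^1$--symmetry of the loop group construction. The principal obstacle, to restate, is the explicit Iwasawa decomposition of stage two.
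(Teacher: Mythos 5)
The paper contains no proof of this theorem in its own text: the statement is imported from \cite{DoWa1} and \cite{Wang-3}, where the explicit Iwasawa decomposition is carried out, so there is no in-paper argument to compare yours against. That said, the portion of your proposal that can be completed inside this paper is correct and is genuine content: the column relations $\check{\mathrm v}_2=i\check{\mathrm v}_1$, $\check{\mathrm v}_4=i\check{\mathrm v}_3$, the isotropy $\check{\mathrm v}_j^tI_{1,3}\check{\mathrm v}_k=0$, and the matching $h_1=i/z$, $h_2=0$, $h_{10}=iz$, $h_{30}=0$, $h_{40}=-z/2$ against \eqref{eq-isotropic}--\eqref{eq-iso-v} all check out, with \eqref{eq-non-deg3} equal to $\tfrac{1}{4|z|^2}\not\equiv 0$; this establishes the type-$(3)$ claim, total isotropy, and $\mathrm{rank}\,\hat B_1=2$, hence (via Wu's formula, as in the proof of Theorem \ref{thm-potential-s6}) the failure of the S--Willmore condition. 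The genuine gap --- which you flag honestly rather than paper over --- is the derivation of the closed formula \eqref{example1}: everything after ``one then performs the Iwasawa splitting'' describes a computation without performing it, and your subsequent checks of $e^{2\omega}\neq 0$, the extension to $z=\infty$, and fullness are all contingent on a formula you have not produced. (Minor slip: the lift is read from the first two \emph{columns} of $\tilde F$, not rows.) If you want a self-contained proof that avoids reproducing the loop-group computation of \cite{Wang-3}, the cleaner route is a posteriori verification: take \eqref{example1} as given, check directly that it is conformal and unbranched on all of $S^2$, compute its conformal Gauss map, and confirm harmonicity together with the normalized potential $\eta$ via Wu's formula; your fullness argument from the $\R$-linear independence of the seven numerator components then goes through as stated.
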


\begin{remark} It has been shown in \cite{Ejiri1988} (see also \cite{Mon}, \cite{Mus1}) that there exist S--Willmore  two-spheres which are obtained by the twistor projection of holomorphic or anti-holomorphic curves in the twistor bundle $\mathfrak{T}S^{2n}$ of $S^{2n}$ (for a general theory about twistor geometry, we refer to \cite{BR}). Our example shows that Willmore two-spheres derived in this way can  also be non--S--Willmore. And we also note that even in $S^6$, in general, the surfaces obtained by the twistor projection of holomorphic curves of the twistor bundle $\mathfrak{T}S^{6}$ will not be Willmore.
\end{remark}
\subsection{Further discussions on general cases}
Applying the discussions in Theorem \ref{thm-potential-s6}, we obtain immediately the following more explicit descriptions of the potentials in Theorem \ref{thm-potential}.
\begin{theorem}\label{thm-potential-1}
We retain the notations in Theorem \ref{thm-potential}. Then one of the following three cases happens
\begin{enumerate}
\item $\hat B_1$ is of the form $(1)$ in Theorem \ref{thm-potential};
\item there exists $\mathrm{v}_1$ and $\mathrm{v}_2$ of the form \eqref{eq-iso-v} such that
\[\mathrm{v}_{j}=-i\hat{\mathrm{v}}_j=h_{j0}\mathrm{v}_1+\tilde{h}_{j0}\mathrm{v}_2,\ 3\leq j\leq l;
\mathrm{v}_{j}=\tilde{h}_{j0}\mathrm{v}_2,\ \hat{\mathrm{v}}_{j}=\hat{h}_{j0}\mathrm{v}_2, l+1\leq j\leq m;\]
\item there exists $\mathrm{v}_1$ and $\mathrm{v}_2$ of the form \eqref{eq-iso-v} such that
\[\mathrm{v}_{j}=-i\hat{\mathrm{v}}_j=h_{j0}\mathrm{v}_1+\tilde{h}_{j0}\mathrm{v}_2, 1\leq j\leq  m.\]
\end{enumerate}
\end{theorem}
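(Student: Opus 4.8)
The plan is to reduce Theorem~\ref{thm-potential-1} to a block-wise version of the argument already used for $m=4$ in the proof of Theorem~\ref{thm-potential-s6}. I work pointwise in $z$ and treat everything algebraically, so all identities below are identities of meromorphic functions. There are two inputs. First, Theorem~\ref{thm-potential} presents $\hat{B}_1=(\hat{B}_{13},\dots,\hat{B}_{1m})$ with each block $\hat{B}_{1j}=(\mathrm{v}_j,\hat{\mathrm{v}}_j)$ of type $(i)$ or type $(ii)$; after permuting the blocks I may assume the type $(ii)$ blocks occupy positions $3\le j\le l$ and the type $(i)$ blocks positions $l+1\le j\le m$. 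Second, strong conformal harmonicity gives \eqref{eq-Willmore harmonic}, which for the normalized potential reads $\hat{B}_1^tI_{1,3}\hat{B}_1=0$; equivalently, every column of $\hat{B}_1$ is $I_{1,3}$-isotropic and any two columns are $I_{1,3}$-orthogonal.

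The key reduction is that these two conditions confine all columns of $\hat{B}_1$ to a single totally isotropic subspace $W\subset(\mathbb{C}^4,I_{1,3})$. Because the complexified form $I_{1,3}$ is non-degenerate on $\mathbb{C}^4$, its maximal totally isotropic subspaces have complex dimension $2$; hence $\dim_{\mathbb{C}}W\le 2$, i.e.\ $\operatorname{rank}(\hat{B}_1)\le 2$. If every block is of type $(i)$ there is nothing more to do and $\hat B_1$ is already of form $(1)$, which is case $(1)$. Otherwise $W$ is genuinely two-dimensional (a single rank drop is the degenerate sub-case handled at the end), and I would fix a standard basis of $W$: a type $(ii)$ column $\mathrm{v}_j$ is a nonzero isotropic vector, so formula (17) of \cite{MWW} lets me---after conjugating by a suitable element of $O^+(1,3)\times O(n)$---take $\mathrm{v}_1$ of \eqref{eq-iso-v} as one basis vector, and then choose the complementary standard isotropic vector $\mathrm{v}_2$ of \eqref{eq-iso-v} so that $\{\mathrm{v}_1,\mathrm{v}_2\}$ spans $W$. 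Here the normal form is convenient because $\mathrm{v}_2=(h_1,h_1,1,i)^t$ already lies in the ``degenerate plane'' $P=\{(a,a,b,ib)^t\}$ cut out by the type $(i)$ shape.

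With the basis in place I would expand each block. A type $(ii)$ block satisfies $\hat{\mathrm{v}}_j=i\mathrm{v}_j$ and $\mathrm{v}_j\in W$, so $\mathrm{v}_j=-i\hat{\mathrm{v}}_j=h_{j0}\mathrm{v}_1+\tilde{h}_{j0}\mathrm{v}_2$ with meromorphic coefficients. A type $(i)$ block has both columns in $P$; writing a vector of $W$ as $a\mathrm{v}_1+b\mathrm{v}_2$ and equating its first two entries forces $a=0$, so every type $(i)$ column is a multiple of $\mathrm{v}_2$, giving $\mathrm{v}_j=\tilde{h}_{j0}\mathrm{v}_2$ and $\hat{\mathrm{v}}_j=\hat{h}_{j0}\mathrm{v}_2$. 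The mutual $I_{1,3}$-orthogonality of the columns is then automatic, since the form vanishes identically on the isotropic plane $W$. Thus the presence of both a type $(ii)$ and a type $(i)$ block yields case $(2)$ (with $\mathrm{v}_2$ spanning the common line $W\cap P$ of the type $(i)$ columns), while all blocks being of type $(ii)$ yields case $(3)$.

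The main obstacle I anticipate is controlling the degenerate configurations and the overlaps between the cases, rather than the generic algebra above. Concretely, when $\operatorname{rank}(\hat{B}_1)=1$ the plane $W$ collapses to a line and a type $(ii)$ block can coincide with a type $(i)$ block, and when $W=P$ a block of type $(ii)$ shape can sit entirely in the minimal-surface plane; in both situations the three cases genuinely overlap, precisely the intersections flagged in the remarks after Theorems~\ref{thm-potential} and \ref{thm-potential-s6}. One must also check the simultaneous compatibility of the normalizations in case $(2)$, namely that a single $O^+(1,3)\times O(n)$-conjugation can make $\mathrm{v}_2$ span $W\cap P$ while keeping $\mathrm{v}_1$ in the normal form \eqref{eq-iso-v}; this follows from solving for the parameters $h_1,h_2$ once the line $W\cap P$ is prescribed, but it is the step most in need of care. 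These degeneracies are absorbed into the boundaries between the three cases rather than treated by a separate argument.
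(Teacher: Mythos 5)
Your proposal is correct and follows essentially the same route as the paper: the paper obtains Theorem \ref{thm-potential-1} by ``applying the discussions in Theorem \ref{thm-potential-s6}'', whose proof is exactly your argument --- the isotropy and orthogonality of the columns forced by \eqref{eq-Willmore harmonic} confine them to a maximal isotropic plane spanned by $\mathrm{v}_1,\mathrm{v}_2$ of \eqref{eq-iso-v} via (17) of \cite{MWW}, after which the type $(i)$/$(ii)$ block shapes from Theorem \ref{thm-potential} yield the three cases. Your block-wise extension to general $m$, including the observation that type $(i)$ columns must lie on the line $W\cap P=\C\,\mathrm{v}_2$, is precisely the intended ``immediate'' generalization.
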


\section{Classification of Nilpotent Lie subalgebras in
$SO^+(1,n+3,\C)$}

In this section we will give a proof of Theorem \ref{thm-potential} by describing all nilpotent Lie subalgebras of $\mathfrak{so}(1,n+3,\C)$ which correspond to the symmetric space $SO^+(1,n+3)/SO^+(1,3)\times SO(n)$. We divide this proof into two steps:
\begin{enumerate}
\item Classifying the corresponding Lie subalgebras in $\mathfrak{so}(1,n+3,\C)$ related to $SO^+(1,n+3)/SO^+(1,3)\times SO(n)$;
\item Using the condition $B_1^tI_{1,3}B_1=0$ to obtain the classification of normalized potentials of strongly conformally harmonic maps.
\end{enumerate}
Correspondingly we have two subsections.

\subsection{The nilpotent Lie subalgebras associated with $SO^+(1,n+3)/SO^+(1,3)\times SO(n)$}

By Theorem 4.11 of \cite{DoWa2}, the nilpotent Lie subalgebras in question are in one to one relation with canonical elements $\xi$. So the first step in our classification will be to derive all  canonical elements $\xi$ related to the symmetric space $SO^+(1,n+3)/SO^+(1,3)\times SO(n)$. Then after computation of the spaces $\sum_{j>0}g^{\xi}_j$, we will obtain the desired nilpotent Lie subalgebras.

We assume that $n$ is even and set $n+4=2m$. Recall that
\[\mathfrak{g }=\mathfrak{so}(1,n+3,\C)=\{A\in Mat(2m, \mathbb{C})| A^tI_{1,2m-1}+I_{1,2m-1}A=0 \}.\]
 We choose a maximal torus of with its Lie algebra $\mathfrak{t}\subset\mathfrak{k}\subset\mathfrak{so}^+(1,n+3,\C)$ being of the form
\begin{equation}\label{eq-xi}\mathfrak{t}=\left(
                                 \begin{array}{cccccc}
                                    \mathbf{a}_{1} &  &  \\
                                    & \ddots &     \\
                                    &  &  \mathbf{a}_{m} \\
                                 \end{array}
                               \right),\ \mathbf{a}_{1}=\left(
                                                         \begin{array}{cc}
                                                           0 & i\cdot a_{11} \\
                                                           i\cdot a_{11} & 0 \\
                                                         \end{array}
                                                       \right),\ \mathbf{a}_{j}=\left(
                                                         \begin{array}{cc}
                                                           0 & a_{jj} \\
                                                           -a_{jj} & 0 \\
                                                         \end{array}
                                                       \right), \ j=2,\cdots, m.
               \end{equation}
Denote by $\hat{\xi}_l$  the element in $\mathfrak{t}$ with $a_{jj}=\delta_{jl}$.
 So any $\xi\in \mathfrak{t}$ can be expressed as \[\xi=a_{11}\hat{\xi}_1+\cdots+a_{mm}\hat\xi_m.\] Moreover, for  any $\xi\in \mathfrak{t}$ we have
\begin{equation}\label{eq-g-xi}
\mathfrak{g}=\mathfrak{so}^+(1,n+3,\C)=\sum_{j=-r(\xi)}^{r(\xi)}\mathfrak{g}_j^{\xi},\ ~~ \hbox{ with }~~ \mathfrak{g}_j^{\xi}:=\left\{X\in \mathfrak{g}\ |\ [\xi,X]= \sqrt{-1} j X\right\},
\end{equation}
and  $r(\xi)$ being the maximal $j$ satisfying $\mathfrak{g}_j^{\xi}\neq\{0\}$.
For $A\in \mathfrak{so}(1,n+3,\C)$, assume that $A=(\mathbf{c}_{ij})$ with $\mathbf{c}_{ij}$ being $2\times2$ sub-matrix.
 Set
\[\begin{split}&\mathbf{E}_{1j}=\mathbf{E}_{j1}=(\mathbf{c}_{kl}) ,\ \ \hbox{with }\mathbf{c}_{1j}=-I_{1,1}\mathbf{c}_{j1}^t =\left(
                                                                        \begin{array}{cc}
                                                                          1 & i \\
                                                                          1 & i \\
                                                                        \end{array}
                                                                      \right)
\hbox{ and all other $\mathbf{c}_{kl}$ vanishing};\\
&\mathbf{F}_{1j}=\mathbf{F}_{j1}=(\mathbf{c}_{kl}) ,\ \ \hbox{with }\mathbf{c}_{1j}=-I_{1,1}\mathbf{c}_{j1}^t =\left(
                                                                        \begin{array}{cc}
                                                                          1 & i \\
                                                                          -1 & -i \\
                                                                        \end{array}
                                                                      \right),\
\hbox{ and all other $\mathbf{c}_{kl}$ vanishing};\\
& \mathbf{H}_{1j}=\mathbf{H}_{j1}=(\mathbf{c}_{kl}) ,\ \ \hbox{with }\mathbf{c}_{1j}=-I_{1,1}\mathbf{c}_{j1}^t =\left(
                                                                        \begin{array}{cc}
                                                                          1 & -i \\
                                                                          1 & -i \\
                                                                        \end{array}
                                                                      \right)\
\hbox{ and all other $\mathbf{c}_{kl}$ vanishing};\\
 &\mathbf{L}_{1j}=\mathbf{L}_{j1}=(\mathbf{c}_{kl}) ,\ \ \hbox{with }\mathbf{c}_{1j}=-I_{1,1}\mathbf{c}_{j1}^t =\left(
                                                                        \begin{array}{cc}
                                                                          1 & -i \\
                                                                          -1 & i \\
                                                                        \end{array}
                                                                      \right)
\hbox{ and all other $\mathbf{c}_{kl}$ vanishing};\
\end{split}\]
                                                                      here $ 2\leq j \leq  m$.
                                                                       Set
\[\begin{split}&\mathbf{E}_{rj}=\mathbf{E}_{jr}=(\mathbf{c}_{kl}) ,\ \ \hbox{with }\mathbf{c}_{rj}=- \mathbf{c}_{jr}^t =\left(
                                                                        \begin{array}{cc}
                                                                          1 & i \\
                                                                          i & -1 \\
                                                                        \end{array}
                                                                      \right)
\hbox{ and all other $\mathbf{c}_{kl}$ vanishing};\\
&\mathbf{F}_{rj}=\mathbf{F}_{jr}=(\mathbf{c}_{kl}) ,\ \ \hbox{with }\mathbf{c}_{rj}=-\mathbf{c}_{jr}^t= \left(
                                                                        \begin{array}{cc}
                                                                          1 & i \\
                                                                          -i & 1 \\
                                                                        \end{array}
                                                                      \right) \
\hbox{ and all other $\mathbf{c}_{kl}$ vanishing};\\
&\mathbf{H}_{rj}=\mathbf{H}_{jr}=(\mathbf{c}_{kl}) ,\ \ \hbox{with }\mathbf{c}_{rj}= -\mathbf{c}_{jr}^t =\left(
                                                                        \begin{array}{cc}
                                                                          1 & -i \\
                                                                          i & 1 \\
                                                                        \end{array}
                                                                      \right)
\hbox{ and all other $\mathbf{c}_{kl}$ vanishing};\\
&\mathbf{L}_{rj}=\mathbf{L}_{jr}=(\mathbf{c}_{kl}) ,\ \ \hbox{with }\mathbf{c}_{rj}= -\mathbf{c}_{jr}^t =\left(
                                                                        \begin{array}{cc}
                                                                          1 & -i \\
                                                                          -i & -1 \\
                                                                        \end{array}
                                                                      \right)\
 \hbox{ and all other $\mathbf{c}_{kl}$ vanishing};
                                                                      \end{split}\]
                                                                      here $ 2\leq r< j \leq  m$. So it is straightforward to verify
\begin{lemma}\label{lemma-eij}
\begin{equation}\label{eq-hat-xi-gj}
\left\{\begin{split}
\mathfrak{g}_{1}^{\hat\xi_r}&=\hbox{Span}_{\C}\left\{\left.\mathbf{E}_{rj},\mathbf{H}_{rj}\right|1\leq j \leq m, j\neq r\right\},\\ \mathfrak{g}_{-1}^{\hat\xi_r} &=\hbox{Span}_{\C}\left\{\left.\mathbf{F}_{rj},\mathbf{L}_{rj}\right|1\leq j \leq m, j\neq r\right\},\\
  \mathfrak{g}_j^{\hat\xi_r} &=\{0\}, \hbox{ for all }|j|>1.
\end{split}\right.
\end{equation}
So $r(\hat{\xi}_r)=1$ for all $r$, $1\leq r\leq  m$.
\end{lemma}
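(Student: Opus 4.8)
The plan is to reduce the whole statement to a blockwise eigenvalue computation, exploiting that $\hat\xi_r$ is supported on a single $2\times2$ diagonal block. Write $\hat\xi_r$ in $2\times2$ block form with its only nonzero block $J_r$ at position $(r,r)$, where $J_1=\left(\begin{smallmatrix}0&i\\i&0\end{smallmatrix}\right)$ and $J_r=\left(\begin{smallmatrix}0&1\\-1&0\end{smallmatrix}\right)$ for $r\geq2$. First I would record that for any $X=(\mathbf{c}_{kl})\in\mathfrak{g}^{\C}$ one has $[\hat\xi_r,X]_{kl}=\delta_{kr}J_r\mathbf{c}_{rl}-\delta_{lr}\mathbf{c}_{kr}J_r$; that is, $[\hat\xi_r,X]$ equals $J_r\mathbf{c}_{rl}$ in position $(r,l)$ for $l\neq r$, equals $-\mathbf{c}_{kr}J_r$ in position $(k,r)$ for $k\neq r$, equals $[J_r,\mathbf{c}_{rr}]$ at $(r,r)$, and vanishes elsewhere. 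In particular $\mathrm{ad}\,\hat\xi_r$ kills every block not meeting the $r$-th block-row or block-column, so the nonzero eigenvalues can only come from the blocks touching the index $r$.

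Next I would determine the spectrum. Since $J_r^2=-I_2$ in both cases, $J_r$ is diagonalizable with eigenvalues $\pm\sqrt{-1}$, so left-multiplication $\mathbf{c}_{rl}\mapsto J_r\mathbf{c}_{rl}$ and right-multiplication $\mathbf{c}_{kr}\mapsto-\mathbf{c}_{kr}J_r$ both have spectrum $\{\pm\sqrt{-1}\}$. This already yields $\mathfrak{g}_j^{\hat\xi_r}=\{0\}$ for $|j|>1$ and $r(\hat\xi_r)=1$, \emph{provided} the diagonal block does not interfere: a priori $[J_r,\cdot]$ could contribute eigenvalues $\pm2\sqrt{-1}$, but the defining relation of $\mathfrak{so}(1,2m-1,\C)$ restricted to the $(r,r)$ block forces $\mathbf{c}_{rr}$ to be a scalar multiple of $J_r$ (antisymmetric when $r\geq2$; of the form $\left(\begin{smallmatrix}0&b\\b&0\end{smallmatrix}\right)$ when $r=1$, because of the $I_{1,1}$ twist), whence $[J_r,\mathbf{c}_{rr}]=0$. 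This is the one genuinely conceptual point, and I expect it to be the main thing to get right, since it is exactly what pins down $r(\hat\xi_r)=1$ instead of $2$.

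It then remains to identify the eigenspaces explicitly. For this I would check directly that each of $\mathbf{E}_{rj},\mathbf{H}_{rj}$ lies in $\mathfrak{g}$ (this is already encoded in their definition through $\mathbf{c}_{1j}=-I_{1,1}\mathbf{c}_{j1}^{t}$ and $\mathbf{c}_{rj}=-\mathbf{c}_{jr}^{t}$) and satisfies $J_r\mathbf{c}_{rj}=\sqrt{-1}\,\mathbf{c}_{rj}$, so that $[\hat\xi_r,\mathbf{E}_{rj}]=\sqrt{-1}\,\mathbf{E}_{rj}$ and similarly for $\mathbf{H}_{rj}$; symmetrically, $\mathbf{F}_{rj},\mathbf{L}_{rj}$ give eigenvalue $-\sqrt{-1}$. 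Here one must keep track of the two index regimes, namely $r=1$ (using $J_1$ and the $I_{1,1}$-twisted relation) versus $2\leq r<j$ (using $J_r$ and plain antisymmetry), together with the symmetry $\mathbf{E}_{rj}=\mathbf{E}_{jr}$ needed when $j<r$; this case-splitting is where the bulk of the routine verification lives.

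Finally I would close the argument by a dimension count rather than by exhibiting spanning by hand. For fixed $r$ the blocks $\mathbf{c}_{rj}$ with $j\neq r$ are arbitrary $2\times2$ matrices, their partners $\mathbf{c}_{jr}$ being determined by the $\mathfrak{so}$-relation, so they contribute $4(m-1)$ complex dimensions which split evenly between the $\pm\sqrt{-1}$ eigenspaces; hence $\dim_{\C}\mathfrak{g}_{\pm1}^{\hat\xi_r}=2(m-1)$. The $2(m-1)$ matrices $\{\mathbf{E}_{rj},\mathbf{H}_{rj}\mid 1\leq j\leq m,\ j\neq r\}$ are supported on distinct block positions for distinct $j$ and are independent for each fixed $j$, hence linearly independent; matching the dimension, they span $\mathfrak{g}_1^{\hat\xi_r}$, and the analogous count for $\{\mathbf{F}_{rj},\mathbf{L}_{rj}\}$ gives $\mathfrak{g}_{-1}^{\hat\xi_r}$. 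This establishes \eqref{eq-hat-xi-gj} and the equality $r(\hat\xi_r)=1$.
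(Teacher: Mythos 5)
Your argument is correct, and it supplies in full the verification that the paper simply labels ``straightforward to verify'': the block formula for $\mathrm{ad}\,\hat\xi_r$, the observation that $J_r^2=-I_2$ while the $\mathfrak{so}$-relation forces $[J_r,\mathbf{c}_{rr}]=0$ (ruling out eigenvalues $\pm2\sqrt{-1}$), the eigenvector check for $\mathbf{E}_{rj},\mathbf{H}_{rj},\mathbf{F}_{rj},\mathbf{L}_{rj}$ in both index regimes, and the dimension count $\dim_{\C}\mathfrak{g}_{\pm1}^{\hat\xi_r}=2(m-1)$ all check out. This is essentially the same (and really the only) approach the paper intends, just carried out explicitly.
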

\vspace{5mm}

Next let us turn to  the simple roots in terms  of $\{\hat{\xi}_j\}$.
Let $\{\tilde\xi_1,\cdots,\tilde\xi_m\}$ be an arbitrary permutation of $\{\hat{\xi}_1,\cdots,\hat{\xi}_m\}$. Let $\tilde{\theta}_j$ be the dual of $\tilde{\xi}_j$, i.e.,
\[\tilde{\theta}_j(\tilde{\xi}_k)=\sqrt{-1}\delta_{jk}, \ \hbox{ for all }\
  j,k=1,\cdots,m.\]  By standard Lie group theory, the roots of  $\mathfrak{so}^+(1,2m-1,\mathbb{C})$ are
\[\{\pm(\tilde{\theta}_j-\tilde{\theta}_k), \ \pm(\tilde{\theta}_j+\tilde{\theta}_k), \ 1\leq j<k\leq m \}.\]
Choose
\[\{ (\tilde{\theta}_j-\tilde{\theta}_k), \  (\tilde{\theta}_j+\tilde{\theta}_k), \ 1\leq j<k\leq m \}\]
to be the set of positive roots. Then it is straightforward to obtain that the simple roots $\{\theta_1,\cdots,\theta_m\}$ of  $\mathfrak{so}^+(1,2m-1,\mathbb{C})$ can be expressed as
$$\theta_j=\tilde{\theta}_j-\tilde{\theta}_{j+1}, \ ~\hbox{ for } j=1,\cdots,m-1,\ ~ \theta_m=\tilde{\theta}_{m-1}+\tilde{\theta}_{m}.$$
Let $\{\xi_1,\cdots,\xi_m\}$ be the dual of the simple roots $\{\theta_1,\cdots,\theta_m\}$, i.e., $\{\xi_j\}$ satisfies
\[\theta_j(\xi_k)= \sqrt{-1}\delta_{jk}\ ~~
\hbox{ for all }\ ~~j,k=1,
\cdots,m.\]
Then we obtain
 \begin{equation}
\xi_j=\sum_{k=1}^{j}\tilde{\xi}_l,\ 1\leq j \leq m-2, \ ~~ \xi_{m-1}=\frac{1}{2}\left(\sum_{j=1}^{m-1}\tilde\xi_j -\tilde\xi_{m}\right),\ ~~ \xi_{m}=\frac{1}{2}\left(\sum_{j=1}^{m-1}\tilde\xi_j +\tilde\xi_{m}\right).
\end{equation}

\begin{lemma}\label{lemma-root}Let $\xi=\xi_{j_1}+\cdots+\xi_{j_r}$ be a canonical element.
Then there exists some $A\in SO^+(1,2m-1, \mathbb{C})$ such that $$Fix_{\xi}=\left\{g\ |\ g\in SO^+(1,2m-1, \mathbb{C}),\ \exp(\pi\xi)g\exp(\pi\xi)^{-1}=g \right\}= A \cdot K^{\mathbb{C}}\cdot A^{-1}$$ if and only if $\xi$ has the form
\[\xi=n_1\tilde\xi_1+n_2\tilde\xi_2+\cdots+n_m\tilde\xi_m\]
with $ n_1,\cdots,n_m$ satisfying the following conditions:
\begin{enumerate}
\item  $n_1$, $\cdots$, $n_m$ $\in\mathbb{Z}$;
\item  $\hbox{max}\{m-1,4\}\geq n_1\geq\cdots\geq n_m=0$;
\item $1\geq n_j-n_{j+1}\geq0$, $j=1,\cdots,m-1$;
\item  $\sharp\{n_j|n_j \hbox{ is odd }\}=2$, or $\sharp\{n_j|n_j  \hbox{ is even }\}=2$.
\end{enumerate}
\end{lemma}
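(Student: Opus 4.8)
The plan is to reduce the statement to an explicit computation of $\exp(\pi\xi)$ together with the standard conjugacy classification of involutions in the complex orthogonal group. First I would write $\xi=\sum_{l=1}^{m} n_l\tilde\xi_l$ and compute $\exp(\pi\xi)$ block by block. Since each $\hat\xi_l$ (hence each $\tilde\xi_l$) acts as a complex structure on its own $2\times2$ block of $\R^{2m}$—the $2\times2$ block $\mathbf{a}_l$ of $\hat\xi_l$ satisfies $\mathbf{a}_l^2=-I_2$—the exponential restricts to $\cos(n_l\pi)I_2+\sin(n_l\pi)\mathbf{a}_l$, which equals $(-1)^{n_l}I_2$ once $n_l\in\mathbb{Z}$. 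Thus $\exp(\pi\xi)=\mathrm{diag}\{(-1)^{n_1}I_2,\dots,(-1)^{n_m}I_2\}$; in particular $\exp(\pi\xi)^2=e$, consistent with $\gamma_\xi(2\pi)=e$, and its $(-1)$-eigenspace has dimension $2\,\sharp\{l:n_l\text{ odd}\}$.

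Next I would identify $Fix_\xi$ with the centralizer $Z(\exp(\pi\xi))$, equivalently the fixed-point subgroup of the involution $\mathrm{Ad}(\exp(\pi\xi))$, which is block diagonal with respect to the $\pm1$-eigenspace splitting of $\exp(\pi\xi)$. Since $K^{\C}=Z(D)$ for $D=\mathrm{diag}\{-I_4,I_{2m-4}\}$, the existence of $A\in SO^+(1,2m-1,\C)$ with $Fix_\xi=A\,K^{\C}A^{-1}=Z(ADA^{-1})$ is equivalent to $\exp(\pi\xi)$ being conjugate to $D$ \emph{or} to $-D$ (recall $Z(h)=Z(-h)$, since $-1$ is central). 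Over $\C$ two such reflections are conjugate precisely when their $(-1)$-eigenspaces have equal dimension and are both nondegenerate for the form $I_{1,2m-1}$—which they automatically are, being eigenspaces of an element of the orthogonal group. Hence the condition becomes $\dim=4$ or $\dim=2m-4$, i.e. $\sharp\{l:n_l\text{ odd}\}=2$ or $\sharp\{l:n_l\text{ even}\}=2$, which is exactly condition $(4)$.

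It remains to see that conditions $(1)$–$(3)$ are forced by $\xi$ being canonical. Here I would substitute the explicit expressions for $\xi_j$ in terms of $\tilde\xi_l$ into $\xi=\sum_{j\in S}\xi_j$. The only half-integer contributions come from the two fork nodes $\xi_{m-1},\xi_m$, whose coefficients on $\tilde\xi_m$ are $\mp\tfrac12$; integrality (condition $(1)$) therefore holds iff $S$ contains both or neither of $m-1,m$, and in either case the $\tilde\xi_m$-coefficient cancels, giving $n_m=0$. Writing $d_l=n_l-n_{l+1}$, the subset-sum structure makes each $d_l$ an indicator, hence $d_l\in\{0,1\}$ (condition $(3)$), and $n_1=\sum d_l\le m-1\le\max\{m-1,4\}$ (condition $(2)$); conversely every integer sequence satisfying $(2)$–$(3)$ is recovered from its jump-set, so these conditions describe exactly the integral canonical elements. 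The freedom to permute the $\hat\xi_l$ and to flip $\tilde\xi_m\mapsto-\tilde\xi_m$ is used to normalize to the monotone form $n_1\ge\cdots\ge n_m=0$ inside the closure of the fundamental Weyl chamber (where the positive roots $\tilde\theta_j\pm\tilde\theta_k$ force $n_1\ge\cdots\ge n_{m-1}\ge|n_m|$).

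I expect the main obstacle to be the conjugacy step in the second paragraph: one must check carefully that $Fix_\xi=A\,K^{\C}A^{-1}$ really reduces to the eigenspace-dimension count, paying attention to (a) the $\pm D$ ambiguity coming from $Z(h)=Z(-h)$, (b) the possibility of choosing the conjugating matrix inside $SO^+(1,2m-1,\C)$ rather than merely in the full orthogonal group (the determinant and component constraints), and (c) the nondegeneracy of the eigenspaces relative to $I_{1,2m-1}$. The remaining delicate bookkeeping is the asymmetric treatment of the two exceptional simple roots $\theta_{m-1},\theta_m$ of the $D_m$-type diagram, which is where both the parity constraint in $(4)$ and the cancellation forcing $n_m=0$ originate.
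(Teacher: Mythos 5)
Your argument follows the same route as the paper's: both reduce the condition $Fix_{\xi}=A\cdot K^{\C}\cdot A^{-1}$ to $\exp(\pi\xi)$ being conjugate to $D$ or to $-D$, read off condition (4) from the dimension of the $(-1)$-eigenspace (equivalently, the multiset of eigenvalues), and obtain (1)--(3) from the requirement that $\exp(\pi\xi)$ have only the eigenvalues $\pm1$ combined with the expression of the canonical generators $\xi_j$ in the basis $\{\tilde\xi_l\}$. The only difference is cosmetic: the paper exhibits an explicit permutation matrix realizing the conjugation, whereas you invoke the general conjugacy classification of orthogonal involutions over $\C$; your write-up simply fills in steps the paper labels ``straightforward.''
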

\begin{proof}
Recall that in our case, $K^{\C}=(SO^+(1,3 )\times SO(n))^{\C}$ is defined as the fixed point set of $D=\hbox{diag}\{-I_4,I_{2m-4}\}$.
It is straightforward to see that if $n_1,\cdots,n_m$ satisfy $(1)-(4)$, then there exists some permutation matrix $A\in SO^+(1,2m-1, \mathbb{C})$ such that $\exp(\pi\xi)=ADA^{-1}$ or $\exp(\pi\xi)=-ADA^{-1}$   and then
$Fix_{\xi}= A \cdot K^{\mathbb{C}}\cdot A^{-1}.$

On the other hand, let $\xi$ be a canonical element such that $Fix_{\xi}= A \cdot K^{\mathbb{C}}\cdot A^{-1}$ for some $A\in SO^+(1,2m-1, \mathbb{C})$. Then  $\exp(\pi\xi)=ADA^{-1}$ or $\exp(\pi\xi)=-ADA^{-1}$. Therefore $\exp(\pi\xi)$ has the same eigenvalues as $D$ or $-D$, i.e.,  $\sharp\{n_j|n_j \hbox{ is odd }\}=2$, or $\sharp\{n_j|n_j  \hbox{ is even }\}=2$. And it has only the eigenvalues $\pm1$, showing that it is the sum of some of the elements $\xi_1,\cdots,\xi_{m-2},\xi_{m-1}+\xi_m$. Then (1), (2) and (3) follow easily.
\end{proof}
Applying Lemma \ref{lemma-root}, we obtain immediately
\begin{lemma}\label{lemma-cano}
There are $(m-1)^2$ types of nilpotent Lie subalgebras related to $SO^+(1,n+3)/SO^+(1,3)\times SO(n)$ in $SO^+(1,2m-1, \mathbb{C})$, with $ n+4=2m$.  Up to conjugation, the corresponding canonical elements $\xi$ is given by one of the following  $ (m-1)^2$ elements:
{  \begin{subequations}  \label{eq-root1:1}
\begin{align}
 &\hat\xi_1+\hat\xi_2, \hbox{ with } r(\xi)=2;         \label{eq-root1:1A} \\
  & \hat\xi_1+\hat\xi_2+2\sum_{j=3}^{l}\hat\xi_j,~~ \hbox{ with } r(\xi)\leq 4,~~ ~3\leq l \leq m-1;       \label{eq-root1:1B} \\
  &  3\hat\xi_1+\hat\xi_2+2\sum_{j=3}^{l}\hat\xi_j,~~~~\hat\xi_1+3\hat\xi_2+2\sum_{j=3}^{l}\hat\xi_j,~~ \hbox{ with } r(\xi)=5,~~~ 3\leq l \leq m-1;  \label{eq-root1:1C}\\
& 3\hat\xi_1+\hat\xi_2+4\sum_{j=3}^{l}\hat\xi_j+2\sum_{j=l+1}^{t}\hat\xi_j,~~~~ \hbox{ with } r(\xi)\leq 8,~~~ 3\leq l <t\leq m-1, ;\label{eq-root1:1D}\\
& \hat\xi_1+3\hat\xi_2+4\sum_{j=3}^{l}\hat\xi_j+2\sum_{j=l+1}^{t}\hat\xi_j,~~~~ \hbox{ with } r(\xi)\leq 8,~~~3\leq l <t\leq m-1;\label{eq-root1:1D1}\\
&  \sum_{j=3}^{m}\hat\xi_j, ~~ \hbox{ with } r(\xi)=2;\label{eq-root1:1E}\\
 &  2\hat\xi_1+\sum_{j=3}^{m}\hat\xi_j,~~~~ 2\hat\xi_2+\sum_{j=3}^{m}\hat\xi_j,~~\hbox{ with } r(\xi)=3; \label{eq-root1:1F}\\
 &  2\hat\xi_1+3\sum_{j=3}^{k}\hat\xi_j+ \sum_{j=k+1}^{m}\hat\xi_j,~2\hat\xi_2+3\sum_{j=3}^{k}\hat\xi_j+ \sum_{j=k+1}^{m}\hat\xi_j,~~~~ \hbox{ with } r(\xi)\leq 6,~~~ 3\leq k  \leq m-1; \label{eq-root1:1G}
\end{align}
\end{subequations}
}
\end{lemma}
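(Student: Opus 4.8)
The plan is to translate the four arithmetic conditions of Lemma~\ref{lemma-root} into a finite combinatorial enumeration and then to read each admissible sequence $(n_1,\dots,n_m)$ back into the basis $\{\hat\xi_1,\dots,\hat\xi_m\}$. First I would unwind conditions (1)--(3): together they say that $(n_1,\dots,n_m)$ is a non-increasing integer sequence with all consecutive gaps in $\{0,1\}$ and $n_m=0$, hence a ``staircase'' in which every integer of $\{0,1,\dots,n_1\}$ is attained at least once and the whole sequence is determined by the multiplicity of each value; in particular $n_1\le m-1$ automatically.

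Next I would extract the binding restriction from condition (4). Because all values in $\{0,\dots,n_1\}$ occur, the number of odd (resp.\ even) entries is at least the number of odd (resp.\ even) integers in that range. Hence ``exactly two odd entries'' forces the odd values present to lie in $\{1,3\}$, i.e.\ $n_1\le 4$, while ``exactly two even entries'' forces the even values to lie in $\{0,2\}$, i.e.\ $n_1\le 3$. This is precisely the origin of the bound $\max\{m-1,4\}$ in condition (2), and it reduces the problem to the finitely many cases $n_1\in\{1,2,3,4\}$.

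The genuine bookkeeping---and the step I expect to be the main obstacle---is passing to conjugacy classes. The Euclidean blocks $2,\dots,m$ may be freely permuted (the residual Weyl symmetry), so among them only the multiset of assigned values matters; the first block, however, carries the timelike direction and is distinguished. Matching the $(-1)$-eigenspace of $\exp(\pi\xi)$ with that of $D=\mathrm{diag}\{-I_4,I_{2m-4}\}$, which has signature $(1,3)$, forces the first coefficient to be odd in the ``two odd'' case and even in the ``two even'' case; this simultaneously shows the two cases never coincide, so no element is counted twice. When the two distinguished coefficients are distinct---namely $\{1,3\}$ for $n_1=3,4$ and $\{0,2\}$ for $n_1=2,3$---one must also record which of them sits on the Lorentzian block, and this is exactly what splits the paired families \eqref{eq-root1:1C}, \eqref{eq-root1:1D}, \eqref{eq-root1:1D1}, \eqref{eq-root1:1F} and \eqref{eq-root1:1G}; when they coincide one obtains the single families \eqref{eq-root1:1A}, \eqref{eq-root1:1B} and \eqref{eq-root1:1E}.

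Finally I would organize the list by the pair (parity case, value $n_1$): the ``two odd'' cases $n_1=1,2,3,4$ yield \eqref{eq-root1:1A}, \eqref{eq-root1:1B}, \eqref{eq-root1:1C} and \eqref{eq-root1:1D}--\eqref{eq-root1:1D1}, while the ``two even'' cases $n_1=1,2,3$ yield \eqref{eq-root1:1E}, \eqref{eq-root1:1F} and \eqref{eq-root1:1G}; the free parameters are the rungs $3\le l\le m-1$, $3\le l<t\le m-1$, or $3\le k\le m-1$ at which the staircase drops. A direct count gives $1+(m-3)+2(m-3)+2\binom{m-3}{2}$ from the odd case and $1+2+2(m-3)$ from the even case, which sum to $(m-1)^2$. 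The value of $r(\xi)$ recorded on each line then follows by reading off the largest eigenvalue of $\mathrm{ad}\,\xi$, which for the $D_m$ root system of $\mathfrak{so}(2m,\mathbb{C})$ equals the sum of the two largest coefficients $n_j$.
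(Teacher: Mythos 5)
Your proposal is correct and follows essentially the same route as the paper: both arguments take Lemma~\ref{lemma-root} as the starting point, split into the two parity cases (corresponding to $\exp(\pi\xi)=D$ versus $\exp(\pi\xi)=-D$), enumerate the admissible non-increasing ``staircase'' sequences by the maximal coefficient $n_1\in\{1,2,3,4\}$ (resp.\ $\{1,2,3\}$), and arrive at the identical count $m-2+2(m-3)+(m-3)(m-4)+2m-3=(m-1)^2$. The only difference is one of emphasis: you spell out why $n_1\le 4$ (resp.\ $\le 3$) follows from the staircase structure and why the Lorentzian block is not interchangeable with the Euclidean ones (which is what splits the paired families), points the paper asserts without comment.
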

\begin{proof}
From the proof of Lemma \ref{lemma-root}, we see that to obtain $K^{\mathbb{C}}$ as fixed point set of $Ad\exp(\pi \xi)$, we need that
either $\exp{\pi \xi}=D$ or $\exp{\pi \xi}=-D.$

(i). For the case $\exp{\pi \xi}=D$, we can decide the form of $ \xi$ by max$\{n_j\}$ by using Lemma \ref{lemma-root}.
If $1\leq\hbox{max}\{n_j\}\leq 2,$ there are $m-2$ types of choices of $ \xi$ (up to conjugation):
\[\hat\xi_1+\hat\xi_2,\ ~~ \hat\xi_1+\hat\xi_2+2\sum_{k=1}^j\hat\xi_k,~~~~ 3\leq j\leq m-1. \]
If $\hbox{max}\{n_j\}= 3$, there are $2(m-3)$ types of choices of $ \xi$:
\[ 3\hat\xi_1+\hat\xi_2+2\sum_{k=1}^j\hat\xi_k,~~\hbox{ or }\ \hat\xi_1+3\hat\xi_2+2\sum_{k=1}^j\hat\xi_k,~~~~ 3\leq j\leq m-1. \]
If $\hbox{max}\{n_j\}= 4$, let $t$ be the number of non-zero $n_j$ in $\xi$. We have that $4\leq t\leq m-1$. For every $t$,
there are $2(t-3)$ types of choices of $\xi$:
\[ 3\hat\xi_1+\hat\xi_2+4\sum_{j=3}^{l}\hat\xi_j+2\sum_{j=l+1}^{t}\hat\xi_j,\ \hbox{ or } \hat\xi_1+3\hat\xi_2+4\sum_{j=3}^{l}\hat\xi_j+2\sum_{j=l+1}^{t}\hat\xi_j,\ \ 3\leq l <t\leq m-1.\]
So altogether there are $ (m-3)(m-4)$ types of canonical elements when  $\hbox{max}\{n_j\}= 4$.

(ii). For the case $\exp{\pi\xi}=-D$, we see that there are $2m-3$ types of choices of $\xi$:
  \[\sum_{j=3}^{m}\hat\xi_j,~ 2\hat\xi_1+\sum_{j=3}^{m}\hat\xi_j ,~ 2\hat\xi_2+\sum_{j=3}^{m}\hat\xi_j ,\]\[ 2\hat\xi_1+3\sum_{j=3}^{k}\hat\xi_j+\sum_{j=k+1}^{m}\hat\xi_j\ \hbox{ or } \  2\hat\xi_2+3\sum_{j=3}^{k}\hat\xi_j+\sum_{j=k+1}^{m}\hat\xi_j,\ 3 \leq k \leq m-1.\]

  In a sum, we have that the number of these canonical elements  mentioned above is
$m-2+2(m-3)+(m-3)(m-4)+2m-3=(m-1)^2.$
\end{proof}

From the definition of $\hat\xi_j$ in \eqref{eq-xi}, for every $\xi$ of the form in \eqref{eq-root1:1} it is straightforward to see that \[\mathfrak{g}_{2j+1}^{\xi}\subset \mathfrak{p}^{\mathbb{C}},\ \mathfrak{g}_{2j}^{\xi}\subset \mathfrak{k}^{\mathbb{C}}, \hbox{ for all }\ j\in \mathbb{Z}. \]

Now let's turn to the discussion of the nilpotent Lie algebra  $\sum_{j>0}\mathfrak{g}_{j}^{\xi}$  for some canonical element $\xi$ in Lemma \ref{lemma-cano}.
Moreover, as stated in Theorem 4.13 of \cite{DoWa2}, to classify the normalized potentials, we need only an explicit description of \[\sum_{j\geq0}\mathfrak{g}_{2j+1}^{\xi}=\sum_{j>0}\mathfrak{g}_{j}^{\xi}\cap\mathfrak{p}^{\mathbb{C}}.\]
Set
 \begin{equation}\label{eq-nil}\begin{split}&\mathfrak{N}_{\mathrm{a}}=\hbox{Span}_{\C}\left\{\left.\mathbf{E}_{rj},\mathbf{H}_{rj},  \right|r=1,2,\ 3\leq  j\leq m \right\},\\
&\mathfrak{N}_{\mathrm{b}l}=\hbox{Span}_{\C}\left\{ \mathbf{E}_{r3},\cdots, \mathbf{E}_{rm},\mathbf{H}_{r3},\cdots,\mathbf{H}_{rl}, \mathbf{F}_{r,l+1},\cdots,\mathbf{F}_{rm} |r=1,2 \right\},\\
&\mathfrak{N}_{\mathrm{c}l}=\hbox{Span}_{\C}\left\{ \mathbf{E}_{r3},\cdots, \mathbf{E}_{rm},\mathbf{H}_{13},\cdots,\mathbf{H}_{1m}, \mathbf{H}_{23},\cdots,\mathbf{H}_{2l}, \mathbf{F}_{2,l+1},\cdots,\mathbf{F}_{2m}  |r=1,2 \right\},\\
&\mathfrak{N}_{\mathrm{c}l}'=\hbox{Span}_{\C}\left\{ \mathbf{E}_{r3},\cdots, \mathbf{E}_{rm},\mathbf{H}_{23},\cdots,\mathbf{H}_{2m}, \mathbf{H}_{13},\cdots,\mathbf{H}_{1l}, \mathbf{F}_{1,l+1},\cdots,\mathbf{F}_{1m} |r=1,2  \right\},\\
& \begin{split}
\mathfrak{N}_{\mathrm{d},lt}=\hbox{Span}_{\C} \{& \mathbf{E}_{r3},\cdots, \mathbf{E}_{rm},\mathbf{H}_{1,l+1},\cdots,\mathbf{H}_{1m}, \mathbf{F}_{13},\cdots,\mathbf{F}_{1l}, \mathbf{F}_{23},\cdots,\mathbf{F}_{2m} |r=1,2  \},\end{split}\\
&\begin{split}\mathfrak{N}_{\mathrm{e},lt}=\hbox{Span}_{\C} \{& \mathbf{E}_{r3},\cdots, \mathbf{E}_{rm},\mathbf{H}_{2,l+1},\cdots,\mathbf{H}_{2m}, \mathbf{F}_{23},\cdots,\mathbf{F}_{2l}, \mathbf{F}_{13},\cdots,\mathbf{F}_{1m} |r=1,2  \},\end{split}
\\
&\mathfrak{N}_{\mathrm{f}}=\hbox{Span}_{\C}\left\{\left.\mathbf{E}_{rj},\mathbf{F}_{rj},  \right|r=1,2,\ 3\leq  j\leq m \right\},\\
&\mathfrak{N}_{\mathrm{g}l}=\hbox{Span}_{\C}\left\{ \mathbf{E}_{r3},\cdots, \mathbf{E}_{rm},\mathbf{H}_{13},\cdots,\mathbf{H}_{1m}, \mathbf{F}_{23},\cdots,\mathbf{F}_{2m} |r=1,2 \right\},\\
&\mathfrak{N}_{\mathrm{g}l}'=\hbox{Span}_{\C}\left\{ \mathbf{E}_{r3},\cdots, \mathbf{E}_{rm},\mathbf{H}_{23},\cdots,\mathbf{H}_{2m}, \mathbf{F}_{13},\cdots,\mathbf{F}_{1m} |r=1,2 \right\},\\
 &\mathfrak{N}_{\mathrm{h}l}=\hbox{Span}_{\C}\left\{ \mathbf{E}_{r3},\cdots, \mathbf{E}_{rm},\mathbf{H}_{1,l+1},\cdots,\mathbf{H}_{1m}, \mathbf{F}_{13},\cdots,\mathbf{F}_{1l}, \mathbf{F}_{23},\cdots,\mathbf{F}_{2m} |r=1,2  \right\},\\
 &\mathfrak{N}_{\mathrm{h}l}'=\hbox{Span}_{\C}\left\{ \mathbf{E}_{r3},\cdots, \mathbf{E}_{rm},\mathbf{H}_{2,l+1},\cdots,\mathbf{H}_{2m}, \mathbf{F}_{23},\cdots,\mathbf{F}_{2l}, \mathbf{F}_{13},\cdots,\mathbf{F}_{1m} |r=1,2  \right\}.\\
\end{split}\end{equation}
 Applying Lemma \ref{lemma-eij}, it is straightforward to see that these subsets provide the corresponding $\sum_{j\geq0}\mathfrak{g}_{2j+1}^{\xi}$ related to the corresponding $\xi$ in \eqref{eq-root1:1A}-\eqref{eq-root1:1G} in Lemma \ref{lemma-root} respectively.

\subsection{Proof of the main theorem}

Now let us turn to the proof of Theorem \ref{thm-potential}, i.e., the classification of potentials.  To take a look at the form of potentials, if we consider the elements in \eqref{eq-nil} spanned by $\mathbf{E}_{rj}$, $\mathbf{F}_{rj}$
and $\mathbf{H}_{rj}$ for some fixed $j$, there are four kinds of possible combinations of them:
\[\{\mathbf{E}_{1j},\ \mathbf{E}_{2j},\ \mathbf{H}_{1j},\ \mathbf{H}_{2j}\},\ \{\mathbf{E}_{1j},\ \mathbf{E}_{2j},\ \mathbf{H}_{1j},\ \mathbf{F}_{2j}\},\ \{\mathbf{E}_{1j},\ \mathbf{E}_{2j},\ \mathbf{F}_{1j},\ \mathbf{H}_{2j}\},\ \{\mathbf{E}_{1j},\ \mathbf{E}_{2j},\ \mathbf{F}_{1j},\ \mathbf{F}_{2j}\}.\]
For such an element $X$, assume that
\[ X=\left(
    \begin{array}{cc}
      0 & B_{1} \\
      -B_{1}^tI_{1,3} & 0 \\
    \end{array}
  \right),\ \hbox{ with }\  B_1=(\mathrm{v}_3,\hat{\mathrm{v}}_3,\cdots,\mathrm{v}_{m},\hat{\mathrm{v}}_{m}).\]
  Then $\left( \mathrm{v}_j, \hat{\mathrm{v}}_j\right)$ belongs to one of the following forms, corresponding to above four kinds of possibilities:
  \[ \left(
                                                             \begin{array}{cc}
                                                               h_{1j} &\hat{h}_{1j}\\
                                                               h_{1j} &\hat{h}_{1j}  \\
                                                               h_{3j}  &\hat{h}_{3j}  \\
                                                               ih_{3j}  &i\hat{h}_{3j}  \\
                                                             \end{array}
                                                           \right),\hspace{5mm} \left(
                                                             \begin{array}{cc}
                                                               h_{1j} &\hat{h}_{1j}\\
                                                               h_{1j} &\hat{h}_{1j}  \\
                                                               h_{3j}  & ih_{3j}  \\
                                                               h_{4j}  & ih_{4j}  \\
                                                             \end{array}
                                                           \right) ,\hspace{5mm} \left(
                                                             \begin{array}{cc}
                                                               h_{1j} & ih_{1j} \\
                                                               h_{2j} & ih_{2j}  \\
                                                               h_{3j}  &\hat{h}_{3j}  \\
                                                               ih_{3j}  &i\hat{h}_{3j}  \\
                                                             \end{array}
                                                           \right),\hspace{5mm}  \left(
                                                             \begin{array}{cc}
                                                               h_{1j} & ih_{1j} \\
                                                               h_{2j} & ih_{2j}  \\
                                                               h_{3j}  & ih_{3j}  \\
                                                               h_{4j}  & ih_{4j}  \\
                                                             \end{array}
                                                           \right) .\]
Recalling the condition of $B_1^tI_{1,3}B_1=0$, we obtain the restriction that $h_{3j}^2+h_{4j}^2=0$ for the second case and the restriction that $h_{1j}^2-h_{2j}^2=0$ for the third case. The holomorphic properties show that $h_{4j}=i h_{3j}$ globally or $h_{4j}=-i h_{3j}$ globally for the second case, and $h_{2j}=h_{1j}$ globally or $h_{2j}=- h_{1j}$ globally for the third case.

Applying the condition of $B_1^tI_{1,3}B_1=0$ repeatedly and by a conjugation of $\hbox{diag}\{-I_{1,3},I_{2m-4}\}$ or  $\hbox{diag}\{-I_{1,1},I_2,I_{2m-4}\}$ if necessary, we see that $\left( \mathrm{v}_j, \hat{\mathrm{v}}_j\right)$ has the form
  \[ \left(
                                                             \begin{array}{cc}
                                                               h_{1j} &\hat{h}_{1j}\\
                                                               h_{1j} &\hat{h}_{1j}  \\
                                                               h_{3j}  &\hat{h}_{3j}  \\
                                                               ih_{3j}  &i\hat{h}_{3j}  \\
                                                             \end{array}
                                                           \right)~~ \hbox{ or }~~ \left(
                                                             \begin{array}{cc}
                                                               h_{1j} & ih_{1j} \\
                                                               h_{2j} & ih_{2j}  \\
                                                               h_{3j}  & ih_{3j}  \\
                                                               h_{4j}  & ih_{4j}  \\
                                                             \end{array}
                                                           \right).\]
Theorem \ref{thm-potential} then follows.
 \\

{\large \bf Acknowledgements}\ \ The author is thankful to Prof. Josef Dorfmeister, Prof. Changping Wang and Prof. Xiang Ma for their suggestions and encouragement.  This work is supported by the Project 11571255 of NSFC and the Fundamental Research Funds for the Central Universities.

{\footnotesize
\def\refname{Reference}
}
\vspace{3mm}

{\small\

Peng Wang

Department of Mathematics, Tongji University,

 Siping Road 1239, Shanghai, 200092, P. R. China

{\em E-mail address}: {netwangpeng@tongji.edu.cn}
 }

\end{document}